\newtheorem{theorem}{Theorem}[section]
\newtheorem{lemma}{Lemma}[section]
\newtheorem{proposition}{Proposition}[section]
\newtheorem{corollary}{Corollary}[section]
\newtheorem{definition}{Definition}[section]
\theoremstyle{remark}
\numberwithin{equation}{section}
\def \Z{\mathbb Z}
\def\X{\mathcal{X}}
\begin{document}
\author{L. Ciobanu, B. Fine, G. Rosenberger}
\date{}
\title{Classes of Groups Generalizing a Theorem of Benjamin Baumslag}

\begin{abstract} In [BB] Benjamin Baumslag proved that being fully residually free is equivalent to being residually free and commutative transitive (CT). Gaglione and Spellman [GS] and Remeslennikov [Re] showed that this is also equivalent to being universally free, that is, having the same universal theory as the class of nonabelian free groups. This result is one of the cornerstones of the proof of the Tarksi problems.  In this paper we extend the class of groups for which Benjamin Baumslag's theorem is true, that is we consider classes of groups $\X$ for which being fully residually $\X$ is equivalent to being residually $\X$ and commutative transitive. We show that the classes of groups for which this is true is quite extensive and includes free products of cyclics not containing the infinite dihedral group, torsion-free hyperbolic groups (done in [KhM]), and one-relator groups with only odd torsion. Further, the class of groups having this property is closed under certain amalgam constructions, including free products and free products with malnormal amalgamated subgroups. We also consider extensions of these classes to classes where the equivalence with universally $\X$ groups is maintained.  
\medskip

\noindent 2000 Mathematics Subject Classification: 20E06, 20E08, 20F70.

\noindent Key words: Fully Residually Free groups, Limit Groups, Commutative Transitivity
\end{abstract}

\maketitle

\section{Introduction} Residual properties have played a major role in infinite group theory. Let $\X$ be a class of groups. Then a group $G$ is {\bf residually} $\X$ if given any nontrivial element $g \in G$ there is a homomorphism $\phi:G \rightarrow H$ where $H$ is a group in $\X$ such that $\phi(g) \ne 1$.  A group $G$ is {\it fully residually} $\X$ if given finitely many nontrivial elements $g_1,...,g_n$
in $G$ there is a homomorphism $\phi:G \rightarrow H$, where $H$ is a group in $\X$, such that
$\phi(g_i) \ne 1$ for all $i=1,...,n$.  Fully residually free groups have played a crucial role in
the study of equations and first order formulas over free groups.  A {\bf universal sentence } in
the language of group theory is a first order sentence using only universal quantifiers (see
[FGMRS]).  The {\bf universal theory} of a group $G$ consists of all universal sentences true in
$G$.  All nonabelian free groups share the same universal theory and a group $G$ is called  {\bf universally free} if it shares the same universal theory as the class of nonabelian free groups. Remeslennikov [Re] and independently Gaglione and Spellman [GS] proved the following remarkable
theorem which became one of the cornerstones in the proof of the Tarski problems (see [Kh 1]and [Se 1].)

\begin{theorem} Suppose $G$ is residually free.  Then the following are equivalent:

$\hphantom{xx}$ (1) $G$ is fully residually free, 

$\hphantom{xx}$ (2) $G$ is commutative transitive, 

$\hphantom{xx}$ (3) $G$ is universally free. 
\end{theorem}

Therefore the class of fully residually free groups coincides with the class of universally free groups. The equivalence of (1) and (2) in the theorem above was proved originally by Benjamin Baumslag ([BB]), where he introduced the concept of fully residually free.

In this paper we consider classes of groups $\X$ for which being fully residually $\X$ is equivalent to being residually $\X$ and commutative transitive, thus extending Baumslag's result.  We prove that the classes of groups for which this is true is quite extensive and includes free groups, torsion-free hyperbolic groups, one-relator groups with only odd torsion, groups acting freely on $\Lambda$-trees where $\Lambda$ is an ordered abelian group, free products of cyclics which do not contain infinite dihedral groups and torsion-free RG-groups (see section 2).  Further the classes of groups with this property is closed under free products and certain amalgamated free products. 

We then consider extending these ideas to classes where the equivalence with universally $\X$ groups is preserved.  Here we need the so-called {\bf big powers} condition. We show that the class where this holds is quite extensive. 

In the next section we introduce some basic definitions and background information and then in section 3 prove our main results.  In section 4 we look at closure properties under amalgam constructions and in section 5 consider universally $\X$ groups.

\section{Background Information} A group $G$ is {\bf commutative transitive} or {\bf CT} if commutativity is transitive on the set of nontrivial elements of $G$. That is if $[x,y] = 1$ and $[y,z] = 1$ for nontrivial elements $x,y,z \in G$ then $[x,z] = 1$.  We want to consider classes of CT groups $\X$ for which the following property holds. We will denote this property by $B\X$. 

\bigskip

\begin{definition} Let $\X$ be a class of groups. If a nonabelian group $G$ is fully residually 
$\X$ if and only if $G$ is residually $\X$ and $CT$, then
we say that $\X$ satisfies $B\X$.
\end{definition}

With this definition B. Baumslag's original theorem says that the class of free groups $\mathcal{F}$ satisfies $B\mathcal{F}$.

\bigskip

In the next section we will prove that a class of groups $\X$ satisfies $B\X$ under very mild conditions and hence the classes of groups for which this is true is quite extensive. However, first we must present a collection of definitions that will play a role in the results. We need the following straightforward characterizations of CT.

\begin{lemma} The following are equivalent:

$\hphantom{xx}$ (1) $G$ is CT,

$\hphantom{xx}$ (2) Centralizers of nontrivial elements of $G$ are abelian.
\end{lemma}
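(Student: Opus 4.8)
The plan is to prove the two implications directly, unwinding the definitions. The only real content is the observation that "commutativity is transitive" and "centralizers are abelian" are two ways of saying the same combinatorial fact about pairs of commuting nontrivial elements.

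First I would prove $(1)\Rightarrow(2)$. Assume $G$ is CT and let $g\in G$ be nontrivial; write $C = C_G(g)$ for its centralizer. I want to show $C$ is abelian, so take arbitrary $x,y\in C$ and show $[x,y]=1$. If either $x$ or $y$ is trivial this is immediate, so assume both are nontrivial. By definition of the centralizer, $[x,g]=1$ and $[g,y]=1$, i.e. $[g,x]=1$ and $[g,y]=1$ (commutator relations are symmetric in the two entries up to inversion, so $[x,g]=1 \iff [g,x]=1$). Now $x,g,y$ are all nontrivial, so CT applied to the chain $[x,g]=1$, $[g,y]=1$ gives $[x,y]=1$. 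Hence $C$ is abelian.

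Next I would prove $(2)\Rightarrow(1)$. Assume every centralizer of a nontrivial element is abelian, and suppose $x,y,z\in G$ are nontrivial with $[x,y]=1$ and $[y,z]=1$. Since $y$ is nontrivial, $C_G(y)$ is abelian by hypothesis. But $[x,y]=1$ means $x\in C_G(y)$, and $[y,z]=1$ means $z\in C_G(y)$. As $C_G(y)$ is abelian, any two of its elements commute, so $[x,z]=1$. This is exactly the CT condition, completing the proof.

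I do not expect any genuine obstacle here; the statement is essentially a definitional reformulation and the argument is a few lines in each direction. The only point requiring the slightest care is keeping track of the nontriviality hypotheses — CT is a statement only about nontrivial elements, so in $(1)\Rightarrow(2)$ one must dispose of the trivial-element cases separately before invoking transitivity, and one should note that in that direction it is enough that $g$ itself be nontrivial for the hypothesis on $C_G(g)$ to be the relevant one. One might also remark that the equivalence shows CT groups have the property that distinct maximal abelian subgroups intersect trivially, which is the form in which the condition is often used later, but that is a corollary rather than part of the proof.
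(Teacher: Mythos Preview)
Your proof is correct; this is exactly the standard two-line argument in each direction. The paper itself does not give a proof of this lemma, calling it a ``straightforward characterization'' and leaving it unproved, so your write-up simply supplies the details the authors omitted.
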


A group $G$ is a {\bf conjugately separated abelian group} or a {\bf CSA group} if maximal abelian subgroups are malnormal. We need the following concerning CSA groups. We give the proofs (see [FGMRS] and the references there).

\begin{lemma} Every CSA group is CT.
\end{lemma}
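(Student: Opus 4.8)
The plan is to reduce to the centralizer characterization of CT supplied by Lemma 2.2: it suffices to show that for every nontrivial $g\in G$ the centralizer $C_G(g)$ is abelian. So I would fix a nontrivial element $g$, write $C=C_G(g)$, and aim to sandwich $C$ inside an abelian subgroup.

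The first step is to produce a maximal abelian subgroup $M$ of $G$ with $g\in M$. This is a standard Zorn's lemma argument: the collection of abelian subgroups of $G$ containing $g$ is nonempty (it contains $\langle g\rangle$) and is closed under unions of chains, so it has a maximal element $M$; because $g\in M$, any abelian subgroup properly containing $M$ would still contain $g$, so $M$ is in fact a maximal abelian subgroup of $G$. Since $G$ is CSA, $M$ is malnormal, i.e.\ $xMx^{-1}\cap M=\{1\}$ whenever $x\notin M$.

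The heart of the proof is the inclusion $C\subseteq M$. Given $h\in C$, we have $h^{-1}gh=g$, hence $g\in h^{-1}Mh\cap M$. As $g\neq 1$ this intersection is nontrivial, so malnormality of $M$ forces $h\in M$. Thus $C\subseteq M$, and since $M$ is abelian, $C=C_G(g)$ is abelian; applying Lemma 2.2 gives that $G$ is CT.

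I do not anticipate a real obstacle: the argument is essentially unwinding the definitions of CSA and malnormality. The only points needing a little care are checking that a maximal abelian subgroup containing the prescribed element $g$ actually exists, and getting the conjugation bookkeeping right so that $h$ centralizing $g$ genuinely places $g$ in $h^{-1}Mh\cap M$ — after that the conclusion is immediate.
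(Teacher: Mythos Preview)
Your argument is correct and follows essentially the same route as the paper: both show that the centralizer of a nontrivial element is contained in a maximal abelian subgroup by using malnormality and the fact that such an element is fixed under conjugation by any centralizer element. Your version is in fact slightly cleaner, choosing a single maximal abelian $M\ni g$ and proving $C_G(g)\subseteq M$ directly, whereas the paper picks two elements of the centralizer, embeds each together with $z$ into separate maximal abelian subgroups, and then argues those two subgroups coincide.
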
 

\begin{proof} Let $G$ be a CSA group and let $z \in G$ with $z \ne 1$.  We show that the centralizer of $z$ must be abelian and hence $G$ is CT.  Let $a,c$ be in the centralizer of $z$ and let $M_1$ be the maximal abelian subgroup of $G$ containing $a,z$ and $M_2$ the maximal abelian subgroup of $G$ containing $b,z$. We then have $z \in M_1 \cap M_2$.  Let $w \in M_1 \setminus M_2$. Then $w^{-1}xw = z$ is a nontrivial element of $w^{-1}M_2w \cap M_2$ so that $w \in M_2$ a contradiction. Therefore $M_1 \subset M_2$. By maximality then $M_1 = M_2$ and hence the centralizer of $z$ is abelian.  It follows that $G$ is CT. 

\end{proof}

The converse is not true (see [FMgrRR ]). However, as we will show, in the presence of property $B\X$ CSA is equivalent to CT.

\begin{lemma} Let $G$ be a CSA group and let $H$ be a subgroup of $G$.  Then $H$ is also a CSA group.
\end{lemma}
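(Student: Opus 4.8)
The plan is to argue directly from the definition of CSA. So let $A$ be an arbitrary maximal abelian subgroup of $H$; the goal is to show $A$ is malnormal in $H$, i.e.\ that $A \cap h^{-1}Ah = 1$ for every $h \in H \setminus A$. The idea is to transport this statement up to $G$, where the malnormality of maximal abelian subgroups is available by hypothesis.

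First I would enlarge $A$ to a maximal abelian subgroup of $G$: since the union of a chain of abelian subgroups of $G$ is again abelian, Zorn's Lemma produces a maximal abelian subgroup $M$ of $G$ with $A \subseteq M$. Now $M \cap H$ is an abelian subgroup of $H$ containing $A$, so maximality of $A$ inside $H$ forces $A = M \cap H$. In particular, if $h \in H \setminus A$, then $h \notin M$. Invoking the CSA property of $G$, the maximal abelian subgroup $M$ is malnormal in $G$, so $M \cap h^{-1}Mh = 1$. Since $A \subseteq M$ (and $h \in H$, so $h^{-1}Ah \subseteq H$), we have $A \cap h^{-1}Ah \subseteq M \cap h^{-1}Mh = 1$. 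Hence $A$ is malnormal in $H$, and as $A$ was an arbitrary maximal abelian subgroup of $H$, the subgroup $H$ is CSA.

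I do not expect any serious obstacle here; the one point that needs care is the step $A = M \cap H$, which relies on the routine fact that every abelian subgroup sits inside a maximal abelian subgroup, together with the maximality of $A$ within $H$. It is also worth a quick sanity check on the degenerate cases — $A$ or $H$ trivial, or $H$ abelian so that $A = H$ and malnormality is vacuous — but none of these causes difficulty. (Alternatively, one could phrase the whole argument in terms of centralizers using Lemma 2.1, but the maximal-abelian-subgroup formulation is the most transparent.)
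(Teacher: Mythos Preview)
Your proof is correct and follows essentially the same route as the paper: enlarge the maximal abelian subgroup $A$ of $H$ to a maximal abelian subgroup $M$ of $G$, observe $A = M \cap H$, and then use malnormality of $M$ in $G$ to deduce malnormality of $A$ in $H$. The only cosmetic difference is that the paper argues the contrapositive (if $xA x^{-1} \cap A \ne 1$ then $x \in A$), whereas you phrase it as $h \notin A \Rightarrow A \cap h^{-1}Ah = 1$.
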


\begin{proof} (see [GKM]) Let $G$ be a CSA group and let $H$ be a subgroup of $G$. Let $A_H$ be a maximal abelian subgroup of $H$. We must show that $A_H$ is malnormal in $H$. Let $x \in H$ with $xA_Hx^{-1} \cap A_H \ne \{1\}$. $A_H$ is contained in a maximal abelian subgroup $A_G$ of $G$. Since $G$ is CSA it follows that $A_G$ is malnormal in $G$ and so $x \in A_G$.  Then $x \in (A_G \cap H) \subset A_H$ and hence $A_H$ is malnormal in $H$.
\end{proof}

Recall that the infinite dihedral group has the presentation $D = <x,y; x^2 = y^2 = 1>$.  Then $xxyx^{-1} = yxyy^{-1} = yx = (xy)^{-1}$ and hence $D$ is not CSA.  We need this in considering classes satisfying $B\X$.

\begin{lemma} If $G$ is a group that contains the infinite dihedral group $D$ then $G$ is not CSA.
\end{lemma}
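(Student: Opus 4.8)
The plan is to run a short contradiction argument that reduces everything to facts already in hand: the fact that a subgroup of a CSA group is CSA (the subgroup lemma proved just above), together with the explicit computation recorded in the remark preceding the statement, which shows that $D$ itself is not CSA. So I would begin by assuming $G$ is CSA and $D \leq G$ (up to isomorphism), and immediately invoke the subgroup lemma to conclude that $D$ must be CSA; it then remains only to contradict this by verifying carefully that $D$ is not CSA.

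For that verification I would fix the presentation $D = \langle x,y \mid x^2 = y^2 = 1\rangle$ and put $t = xy$. The element $t$ has infinite order, so $t \neq 1$, and $\langle t \rangle$ is an infinite cyclic — hence abelian — subgroup. Using the normal form of elements of the infinite dihedral group (every element is either a power of $t$ or a power of $t$ times $x$, the latter having order $2$ and not commuting with $t$), one checks that $\langle t \rangle$ is in fact a \emph{maximal} abelian subgroup of $D$. The identity from the remark, $x\,t\,x^{-1} = x(xy)x^{-1} = yx = (xy)^{-1} = t^{-1}$, shows that $x$ normalizes $\langle t \rangle$. Since $x$ has order $2$ we have $x \notin \langle t \rangle$, so $x\langle t\rangle x^{-1}\cap \langle t\rangle = \langle t\rangle \neq \{1\}$ with $x$ outside $\langle t\rangle$; thus the maximal abelian subgroup $\langle t\rangle$ is not malnormal, and $D$ is not CSA.

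Putting the two steps together: if $G$ contained $D$ and were CSA, then $D$ would be CSA by the subgroup lemma, contradicting the previous paragraph; hence no group containing $D$ can be CSA. The only point requiring a little care — and the nearest thing to an obstacle, though it is routine — is justifying that $\langle t\rangle$ is genuinely a maximal abelian subgroup of $D$, so that its failure of malnormality is the relevant one; this is immediate from the structure of $D$ as $\mathbb{Z} \rtimes \mathbb{Z}/2\mathbb{Z}$. Everything else is formal.
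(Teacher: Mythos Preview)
Your proof is correct and follows essentially the same approach as the paper: the paper records the computation $x(xy)x^{-1}=(xy)^{-1}$ showing $D$ is not CSA and then states the lemma without further proof, relying implicitly on the subgroup lemma (Lemma~2.3) exactly as you do. Your write-up is in fact more careful than the paper's, since you explicitly verify that $\langle xy\rangle$ is a \emph{maximal} abelian subgroup of $D$, which is needed for the malnormality failure to be relevant.
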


Beyond CSA we need the following ideas.  A group $G$ is {\bf power commutative} if $[x,y^n] =1$ implies that $[x,y] = 1$ whenever $y^n \ne 1$.  Two elements $a,b \in G$ are in {\bf power relation} to each other if there exists an $x \in G\setminus \{1\}$ with $a = x^n,b=x^m$ for some $n,m \in \Z$.  $G$ is {\bf power transitive} or {\bf PT} if this relation is transitive on nontrivial elements.  

A group $G$ is an {\bf RG- group} or {\bf Restricted-Gromov group} if for any $g,h \in G$ either the subgroup $<g,h>$ is cyclic or there exists a positive integer $t$ with $g^t \ne 1, h^t \ne 1$ and $<g^t,h^t> = <g^t> \star <h^t>$.  Note that torsion-free hyperbolic groups are RG-groups.  

The following ideas are crucial in handling property $B\X$.

A group $G$ is {\bf ALC} if every abelian subgroup is locally cyclic.  This is of course the case in free groups. A group $G$ is {\bf ANC} if every abelian normal subgroup is contained in the center of $G$. Finally a group is {\bf NID} if it does not contain a copy of the infinite dihedral group $\Z_2 \star \Z_2$.  If $G$ has only odd torsion then clearly $G$ is NID.

\section{Classes of groups $\X$ Satisfying $B\X$}

We now prove the following.

\begin{theorem} Let $\X$ be a class of groups such that each nonabelian $H \in \X$ is CSA. Let $G$ be a nonabelian and residually $\X$ group.  Then the following are equivalent

$\hphantom{xx}$ (1) $G$ is fully residually $\X$ 

$\hphantom{xx}$ (2) $G$ is CSA

$\hphantom{xx}$ (3) $G$ is CT

Therefore the class $\X$ has the property $B\X$.
\end{theorem}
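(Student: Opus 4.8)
The plan is to establish the cycle of implications $(1)\Rightarrow(3)\Rightarrow(2)\Rightarrow(1)$ for the (nonabelian, residually $\X$) group $G$. Once this is done the three conditions are equivalent, and since the resulting equivalence $(1)\Leftrightarrow(3)$ is literally the statement that $\X$ satisfies $B\X$, the final assertion of the theorem requires no extra work. Throughout I will use freely that every CSA group is CT (Lemma~2.2) and that in a CT group the centralizer of a nontrivial element is a maximal abelian subgroup: it is abelian by Lemma~2.1, and it is maximal because any larger abelian subgroup would still centralize that element and hence be contained in the centralizer.

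For $(1)\Rightarrow(3)$, assume $G$ is fully residually $\X$ and suppose, for contradiction, that it is not CT. Pick nontrivial $x,y,z\in G$ with $[x,y]=[y,z]=1$ but $[x,z]\neq 1$, and apply the full residual property to the finite set $\{x,y,z,[x,z]\}$, getting $\phi\colon G\to H$, $H\in\X$, nontrivial on all four elements. Since $\phi([x,z])\neq 1$, the elements $\phi(x),\phi(z)$ do not commute, so $H$ is nonabelian, hence CSA, hence CT; but $\phi(x),\phi(y),\phi(z)$ are nontrivial with $[\phi(x),\phi(y)]=[\phi(y),\phi(z)]=1$, so commutative transitivity of $H$ forces $[\phi(x),\phi(z)]=1$, i.e.\ $\phi([x,z])=1$, a contradiction. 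Thus $G$ is CT.

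For $(3)\Rightarrow(2)$, assume $G$ is CT and residually $\X$ and suppose, for contradiction, that it is not CSA. Then some maximal abelian subgroup $M$ fails to be malnormal, so there are $g\in G\setminus M$ and a nontrivial $u\in M\cap gMg^{-1}$; put $v:=g^{-1}ug$, so that $v\in M$, $v\neq 1$, and $u=gvg^{-1}$. Since $G$ is CT we have $M=C_G(u)$, and as $g\notin M$ this gives $[g,u]\neq 1$. Choose $\phi\colon G\to H\in\X$ with $\phi([g,u])\neq 1$. Then $\phi(u)\neq 1$ (else $[\phi(g),\phi(u)]=1$), and since $\phi(u)=\phi(g)\phi(v)\phi(g)^{-1}$ also $\phi(v)\neq 1$; moreover $\phi(g),\phi(u)$ do not commute, so $H$ is nonabelian, hence CSA and CT. Now $C_H(\phi(u))$ is a maximal abelian subgroup of $H$, hence malnormal; it contains $\phi(v)$ because $[v,u]=1$ in $M$, and it contains $\phi(u)=\phi(g)\phi(v)\phi(g)^{-1}$, which is nontrivial. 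Hence $\phi(g)\,C_H(\phi(u))\,\phi(g)^{-1}\cap C_H(\phi(u))\neq\{1\}$, so malnormality gives $\phi(g)\in C_H(\phi(u))$ and therefore $\phi([g,u])=1$, a contradiction. Thus $G$ is CSA.

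The implication $(2)\Rightarrow(1)$ is the heart of the theorem and the step I expect to be the main obstacle; it is the generalization of B.\ Baumslag's original separation argument from free groups to arbitrary CSA targets. Let $G$ be CSA and residually $\X$, and let $g_1,\dots,g_n\in G$ be nontrivial; we must produce a single $\phi\colon G\to H\in\X$ nontrivial on all of them. First I would reduce to the abelian case by a commutator trick: while some two of the current elements, say $g_i$ and $g_j$, do not commute, replace the pair $\{g_i,g_j\}$ by the single nontrivial element $[g_i,g_j]$ (any homomorphism keeping $[g_i,g_j]$ nontrivial keeps both $g_i$ and $g_j$ nontrivial), strictly decreasing the count; this process terminates with a list of nontrivial elements that pairwise commute and therefore lie in a common maximal abelian subgroup $A$ of $G$, which is malnormal since $G$ is CSA. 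The remaining, and genuinely delicate, task is to separate finitely many nontrivial elements of this abelian subgroup $A$ by a \emph{single} homomorphism into some member of $\X$: residual $\X$-ness gives one homomorphism for each element individually, and these must be merged into one without collapsing any element. This is exactly where the hypothesis that nonabelian members of $\X$ are CSA is essential — in a nonabelian CSA image the abelian subgroups are sufficiently rigid that the Baumslag-style amalgamation goes through — and for $\X$ the class of free groups this merging step is precisely Baumslag's theorem. With $(2)\Rightarrow(1)$ in hand the cycle closes and the proof is complete.
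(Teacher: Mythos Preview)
Your arguments for $(1)\Rightarrow(3)$ and $(3)\Rightarrow(2)$ are correct (the paper runs the cycle the other way, $(1)\Rightarrow(2)\Rightarrow(3)\Rightarrow(1)$, but that is cosmetic). The genuine gap is in your $(2)\Rightarrow(1)$. Your commutator reduction is fine and does bring the problem down to finitely many pairwise commuting nontrivial elements lying in a single maximal abelian $A$. But at that point you stop: you say the remaining separation is ``genuinely delicate'' and appeal vaguely to rigidity of abelian subgroups in CSA targets and to Baumslag's theorem. That is not an argument, and in fact your intuition about where the CSA hypothesis enters is off --- the key step happens in the source group $G$, not in the target.

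The missing idea is the conjugation/normal-closure trick together with Lemma~3.2. The paper proves directly (using only CT on $G$, not CSA) that the finite set $g_1,\dots,g_n$ can be replaced by a \emph{single} nontrivial witness $g$ with the property that $g\notin N$ forces every $g_i\notin N$ for any normal $N\lhd G$. The induction step is: given such a $g$ for $g_1,\dots,g_n$, set $c(x)=[\,g,\,xg_{n+1}x^{-1}\,]$. If $c(x)=1$ for all $x$, then every conjugate of $g_{n+1}$ commutes with $g$, so by CT the normal closure of $g_{n+1}$ is abelian; Lemma~3.2 (which uses residual $\X$ and that nonabelian members of $\X$ are CSA, hence ANC) then forces this normal closure to be trivial, contradicting $g_{n+1}\neq 1$. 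Hence some $c(x_0)\neq 1$, and $g':=c(x_0)$ works as the new witness for $g_1,\dots,g_{n+1}$. Note that this handles precisely the case that defeats your plain commutator reduction: when $g$ and $g_{n+1}$ already commute, one conjugates $g_{n+1}$ first so as to manufacture a nontrivial commutator. Once you have a single witness $g$, residual $\X$-ness alone finishes the job.
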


To prove this we first need the following lemmas.

\begin{lemma} CSA implies ANC, that is, if $G$ is a CSA group then $G$ is ANC.  Hence if a class of groups satisfies CSA then it also satisfies ANC.
\end{lemma}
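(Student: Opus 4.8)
The plan is to prove the sharper fact that a CSA group has no nontrivial abelian normal subgroup unless it is itself abelian; ANC then follows at once. So let $G$ be a CSA group and let $A$ be an abelian normal subgroup of $G$. If $A = \{1\}$ then $A \subseteq Z(G)$ trivially, so assume $A \ne \{1\}$ and fix $a \in A$ with $a \ne 1$.

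First I would enlarge $A$ to a maximal abelian subgroup $M$ of $G$ with $A \subseteq M$. Such an $M$ exists by Zorn's lemma applied to the poset of abelian subgroups of $G$ that contain $A$, ordered by inclusion, since the union of a chain of abelian subgroups is again abelian. By the CSA hypothesis, $M$ is malnormal in $G$.

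Now take an arbitrary $g \in G$. Because $A$ is normal, $gag^{-1} \in A \subseteq M$, and of course $gag^{-1} \in gMg^{-1}$. Thus $gag^{-1}$ is a nontrivial element of $gMg^{-1} \cap M$, so malnormality of $M$ gives $g \in M$. Since $g$ was arbitrary, $G = M$, hence $G$ is abelian and $Z(G) = G \supseteq A$. In every case $A \subseteq Z(G)$, which is precisely ANC for $G$; applying this to each member of a class of CSA groups shows that such a class satisfies ANC.

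There is really no serious obstacle here. The only two points that deserve a word of care are the existence of the maximal abelian overgroup $M$ (Zorn's lemma, using that abelianness is preserved under chain unions) and the trivial case $A = \{1\}$. One could instead take $M$ to be a maximal abelian subgroup containing only $a$, and observe that, since CSA groups are CT and hence have abelian centralizers of nontrivial elements, $C_G(a)$ is abelian and therefore equals $M$, so that again $gag^{-1} \in C_G(a) = M$; but feeding all of $A$ into $M$ from the start is the cleanest route.
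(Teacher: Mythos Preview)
Your proof is correct and follows essentially the same route as the paper's: enlarge $A$ to a maximal abelian subgroup, then use normality of $A$ together with malnormality to force every $g \in G$ into that subgroup. You are a bit more careful than the paper (explicitly handling $A=\{1\}$, invoking Zorn's lemma, and stating the sharper conclusion that $G$ itself is abelian when $A \ne \{1\}$), but the argument is the same.
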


\begin{proof} Let $G$ be a CSA group and let $A$ be an abelian normal subgroup of $G$.  Then $A$ is contained in a maximal abelian subgroup $B$ of $G$.  Let $a \in A$ and $x \in G$.  Then $xax^{-1} \in A$ since $A$ is normal. Now $a \in B$ and hence $xax^{-1} \in B$. Since $G$ is CSA it follows that $B$ is malnormal and therefore $x \in B$. Since $B$ is abelian it follows that $xax^{-1} = a$ for all $x \in G$ and therefore $A$ is in the center of $G$. Hence $G$ satisfies ANC.

\end{proof}

\begin{lemma} Let $\X$ be as in Theorem 3.1 and let $G$ be nonabelian and residually $\X$. Let $A$ be an abelian normal subgroup of $G$. Then $A$ is contained in the center of $G$. In particular if $G$ is CT then $A$ must be trivial.
\end{lemma}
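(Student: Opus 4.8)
The plan is to deduce the claim from the two preceding lemmas by means of a single homomorphism supplied by residual $\X$-ness. Fix $a \in A$ and $g \in G$ and set $c = [a,g] = a^{-1}g^{-1}ag$. Since $A$ is normal in $G$ we have $g^{-1}ag \in A$, hence $c \in A$; proving that $c = 1$ for every such $a$ and $g$ is exactly the assertion $A \subseteq Z(G)$.

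Suppose, toward a contradiction, that $c \neq 1$ for some $a \in A$ and $g \in G$. Because $G$ is residually $\X$, there is a homomorphism $\phi \colon G \to H$ with $H \in \X$ and $\phi(c) \neq 1$. Since $\phi(c) = [\phi(a),\phi(g)]$, the elements $\phi(a)$ and $\phi(g)$ do not commute, so $H$ is nonabelian and therefore CSA by the hypothesis on $\X$. A subgroup of a CSA group is CSA (Lemma 2.3), so $\phi(G) \le H$ is CSA, and hence ANC since CSA implies ANC. But $\phi(A)$ is abelian, being a homomorphic image of the abelian group $A$, and it is normal in $\phi(G)$, being the image of the normal subgroup $A$; by the ANC property $\phi(A)$ lies in the center of $\phi(G)$. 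In particular $\phi(a)$ and $\phi(g)$ commute, whence $\phi(c) = 1$, a contradiction. Therefore $c = 1$ for all $a \in A$ and $g \in G$, i.e., $A \subseteq Z(G)$.

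For the final assertion, assume in addition that $G$ is CT and that $A \neq \{1\}$. Pick $1 \neq a \in A \subseteq Z(G)$; since $G$ is nonabelian there are $x,y \in G$ with $[x,y] \neq 1$, and in particular $x$ and $y$ are nontrivial. Then $[x,a] = 1 = [a,y]$, so commutative transitivity forces $[x,y] = 1$, a contradiction, and hence $A$ is trivial. I do not expect a genuine obstacle here; the single point worth isolating is that one need not separately arrange for the residual image to be nonabelian — the inequality $\phi(c) \neq 1$ already exhibits non-commuting elements in $H$, which is precisely what makes the CSA hypothesis on $\X$ applicable.
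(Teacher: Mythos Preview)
Your argument is correct and follows essentially the same route as the paper: assume some commutator $[a,g]$ is nontrivial, push it through a residual map into a group in $\X$, and use that the image is CSA (hence ANC) to force the image of $A$ into the center, contradicting $\phi([a,g])\ne 1$. Your version is in fact slightly more careful than the paper's in two places---you explicitly note that $\phi(c)\ne 1$ forces $H$ to be nonabelian (so the CSA hypothesis applies), and you pass to $\phi(G)$ via Lemma~2.3 rather than tacitly assuming $G/N\in\X$---but the underlying idea is identical.
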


\begin{proof} First notice that if $G \in \X$ then by assumption $G$ is CSA and therefore $G$ is ANC from Lemma 3.1. Now let $G$ be nonabelian and residually $\X$. Let $A$ be an abelian normal subgroup of $G$. Suppose that $A$ is not contained in the center of $G$.  Then there exist $a \in A$ and $b \in G$ such that $[a,b] \ne 1$.  Since $G$ is nonabelian and residually $\X$ there exists a normal subgroup $N$ of $G$ with $G/N \in \X$ and $[a,b] \ne 1$ modulo $N$.  However $AN/N$ is a nontrivial normal abelian subgroup of $G/N$ and since $G/N$ is ANC as a group in $\X$, $AN/N$ must be contained in the center of $G/N$. But this contradicts $[a,b] \ne 1$ modulo $N$. Therefore $A$ is contained in the center of $G$.

If $G$ is also CT and nonabelian then it is centerless so from the above it follows that any normal abelian subgroup must be trivial.
\end{proof}

Now we give the proof of Theorem 3.1.

\begin{proof} We first show that (1) implies (2).  Suppose that $G$ is fully residually $\X$.  Since $G$ is nonabelian it follows that $G$ has nontrivial elements.  Let $u$ be a nontrivial element of $G$ and  denote by $M$ the centralizer $C_G(u):$
$$C_G(u) = \{ x \in G ; ux = xu \} = M.$$

Then $M$ is maximal abelian in $G$. We claim that $M$ is malnormal in $G$.  Suppose that $w = g^{-1}ug \ne 1$ lies in $g^{-1}Mg \cap M$. If $g \notin M$ then $[g,u] \ne 1$. Then there is a group $H$ in $\X$ and an epimorphism $\phi:G \rightarrow H$ taking $x \rightarrow \overline{x}$ such that $\overline{w} \ne 1$ and $[\overline{g},\overline{u}] \ne 1$.  Let $C = C_H(\overline{u})$. Then $\overline{w} \in (\overline{g}^{-1}C\overline{g}) \cap C$.

However $H \in \X$ and hence by assumption $H$ is CSA so that the maximal abelian subgroups of $H$ are malnormal. This implies that $\overline{g} \in C$ contradicting $[\overline{g},\overline{u}] \ne 1$. This contradiction shows that $g^{-1}Mg \cap M \ne 1$ implies that $g \in M$. Hence the maximal abelian subgroups of $G$ are malnormal and therefore $G$ is CSA. This proves that (1) $\implies$ (2).

We now show that (2) implies (3), that is that CSA implies CT. This follows from Lemma 3.1.  

Finally we show that (3) implies (1).  Let $G$ be a CT, nonabelian and residually $\X$ group. We show that $G$ is fully residually $\X$. 

Let $g_1,g_2,...,g_n$ be a set of nontrivial elements of $G$. We must show that there is a group $H \in \X$ and an epimorphism $\phi:G \rightarrow H$ such that $\phi(g_i) \ne 1$ for all $i = 1,...,n$. Recall this is equivalent to showing that given $g_1,...,g_n$ nontrivial elements in $G$ there is a normal subgroup $N$ with $g_1,...,g_n$ not in $N$ and $G/N \in \X$.  We show this by induction on the size $n$ of the set of elements.  

Since $G$ is assumed to be residually $\X$ this is true for $n = 1$.  

Assume that for $n \ge 1$ given nontrivial elements $g_1,..,g_n \in G$ there exists a nontrivial $g \in G$ such that for all normal subgroups $N$ of $G$ if $g \notin N$ then $g_i \notin N$ for $i = 1,...,n$.  Since $G$ is residually $\X$ this is true for $n = 1$.  We show that given $g_1,...,g_n,g_{n+1}$ we can find a $g' \ne 1$ such that if $g' \notin N$ for any normal subgroup $N$ of $G$ then $g_i \notin N$ for $i = 1,...,n+1$.  Let $g$ be the assumed element for $g_1,...,g_n$ and for each $x \in G$ let $c(x) = [g,xg_{n+1}g^{-1}]$.  If $c(x) = 1$ for all $x$ then each conjugate of $g_{n+1}$ commutes with $g \ne 1$.  Then by commutative transitivity the normal closure $N_{g_{n+1}}$ is abelian and hence here trivial from Lemma 3.2.  But $g_{n+1}$ is contained in it and nontrivial. Therefore $c(x) \ne 1$ for some $x \in G$.  Choose this nontrivial $c(x)$ as $g'$. Then if $g' \notin N$ for a normal subgroup $N$ of $G$ it follows that $g_1,...,g_{n+1} \notin N$.  It follows from this induction that for each finite set $g_1,...,g_n \in G$ there is a $g \in G$ such that if $g \notin N$ for some normal subgroup $N$ of $G$ then $g_1,...,g_n$ is also not in $N$.  Since $G$ is residually $\X$ it follows that there is such an $N$ with $G/N \in \X$. Therefore $G$ is fully residually $\X$ proving that (3) implies (1) and completing the proof of Theorem 3.1. 

\end{proof}

Hence a class of groups $\X$ satisfies $B\X$ if each nonabelian $H \in \X$ is CSA. Examples of $B\X$ classes abound. In particular we list the following.

\begin{theorem} Each of the following classes satisfies $B\X$:

$\hphantom{xx}$ (1) The class of nonabelian free groups.

$\hphantom{xx}$ (2) The class of noncyclic torsion-free hyperbolic groups (see [FR]).

$\hphantom{xx}$ (3) The class of noncyclic one-relator groups with only odd torsion (see [FR]).

$\hphantom{xx}$ (4) The class of cocompact Fuchsian groups with only odd torsion.

$\hphantom{xx}$ (5) The class of noncyclic groups acting freely on $\Lambda$-trees where $\Lambda$ is an ordered abelian group (see [H]).

$\hphantom{xx}$ (6) The class of noncylic free products of cyclics with only odd torsion.

$\hphantom{xx}$ (7) The class of noncyclic torsion-free RG-groups (see[FMgrRR] and [AgrRR]).

$\hphantom{xx}$ (8) The class of conjugacy pinched one-relator groups of the following form
$$G = <F,t; tut^{-1} = v>$$
where $F$ is a free group of rank $n \ge 1$ and $u,v$ are nontrivial elements of $F$ that are not proper powers in $F$ and for which $<u> \cap x<v>x^{-1} = \{1\}$ for all $x \in F$.

\end{theorem}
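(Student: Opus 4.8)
The plan is to reduce the whole theorem to a single assertion via Theorem 3.1: once we know that every nonabelian group in each of the eight classes is CSA, Theorem 3.1 immediately yields property $B\X$ (indeed the stronger statement that, for nonabelian residually $\X$ groups, fully residually $\X$, CSA, and CT all coincide). So the proof splits into eight independent verifications of the form ``every nonabelian group in the class is CSA,'' and the two background tools I would use throughout are Lemma 2.3 (a subgroup of a CSA group is CSA) and Lemma 2.4 (a group containing the infinite dihedral group $D = \Z_2 \star \Z_2$ is not CSA). Lemma 2.4 also explains the recurring ``only odd torsion'' hypothesis: odd torsion forces the NID property, and for the combinatorial classes below NID is precisely the obstruction that must be removed before CSA can hold.

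For classes (1), (2), (3), (5) and (7) I would simply invoke the literature after noting the common mechanism. A nonabelian free group is CSA because centralizers of nontrivial elements are infinite cyclic and malnormal (classical), giving (1); noncyclic torsion-free hyperbolic groups are CSA by [FR] (also [KhM]), giving (2); noncyclic one-relator groups with only odd torsion are CSA by [FR], giving (3); noncyclic groups acting freely on $\Lambda$-trees over an ordered abelian group are CSA by [H], giving (5); noncyclic torsion-free RG-groups are CSA by [FMgrRR] and [AgrRR], giving (7). In all of these the uniform reason is the same: maximal abelian subgroups (equivalently, by Lemma 2.1, centralizers of nontrivial elements) are locally cyclic, and malnormality is forced by the geometry of the ambient tree or space, or by the combinatorial normal form.

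The cases requiring genuine argument are (4), (6) and (8). For (6): given a nonabelian free product of cyclics in which every finite factor has odd order, I would first use the Kurosh subgroup theorem to see that every subgroup is a free product of a free group with conjugates of subgroups of the factors; hence $G$ has no element of order $2$, so no copy of $D$, i.e. $G$ is NID. Then, again via Kurosh, every abelian subgroup is conjugate into a factor or is infinite cyclic generated by a power of a cyclically reduced element of syllable length $\ge 2$; in either case the maximal abelian closure is explicit, and malnormality follows from the free-product normal form together with the fact that elements of a single factor that are conjugate in $G$ are already conjugate in that factor. (Equivalently, one cites the known fact that a free product of cyclics is CSA exactly when it is NID.) For (4): a nonelementary cocompact Fuchsian group $\Gamma$ with only odd torsion acts properly discontinuously and cocompactly on $\mathbb{H}^2$; every abelian subgroup is cyclic, and since $\Gamma$ has no involutions there are no half-turns or reflections, so the setwise stabilizer of the axis of a hyperbolic element and the stabilizer of an elliptic fixed point are each (finite or infinite) cyclic and self-normalizing. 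Malnormality then follows from uniqueness of the fixed set — a point in $\mathbb{H}^2$ for an elliptic, a pair on $\partial\mathbb{H}^2$ for a hyperbolic — so anything conjugating one nontrivial power into another must preserve that fixed set, hence lie in the stabilizer. For (8): I would regard $G = \langle F, t; tut^{-1} = v\rangle$ as an HNN extension of the free group $F$ with associated infinite cyclic subgroups $A = \langle u\rangle$ and $B = \langle v\rangle$. Since $u$ (resp.\ $v$) is not a proper power in $F$, the subgroup $A$ (resp.\ $B$) is malnormal in $F$, and $F$ itself is CSA; the hypothesis $\langle u\rangle \cap x\langle v\rangle x^{-1} = \{1\}$ for all $x \in F$ says every conjugate of $B$ meets $A$ trivially, so in particular $A$ and $B$ are non-conjugate in $F$. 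Under exactly these hypotheses the HNN extension is CSA: the maximal abelian subgroups of $G$ are conjugates of those of $F$ together with infinite cyclic subgroups carried by cyclically reduced elements involving $t$, and malnormality is checked from the Britton normal form, the dangerous collision case being ruled out precisely by malnormality of $A$ and $B$ in $F$ and by their non-conjugacy. This is the HNN instance of the amalgam closure results of Section 4.

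I expect the main obstacle to be (8), and secondarily (6): the normal-form/Britton bookkeeping needed to enumerate all maximal abelian subgroups and verify malnormality is where the real work lies, and if the HNN combination statement used for (8) is not quoted from elsewhere it must be proved by essentially the Section 4 argument. A smaller subtlety appears in (4), where one must confirm that ``only odd torsion'' genuinely upgrades $\Gamma$ from CT to CSA (it does, via the fixed-set argument, but the NID hypothesis is really used). Once CSA is established for all eight classes, Theorem 3.1 completes the proof.
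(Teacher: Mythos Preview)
Your proposal is correct and follows exactly the paper's approach: reduce everything to Theorem~3.1 by showing that each nonabelian group in each of the eight classes is CSA. The only difference is one of detail. The paper's own proof is a single sentence --- ``The theorem follows from the fact that each of these classes has the property that each nonabelian group in them is CSA'' --- with the CSA assertions for the individual classes treated as known facts supported by the references already embedded in the theorem statement (\lbrack FR\rbrack, \lbrack H\rbrack, \lbrack FMgrRR\rbrack, \lbrack AgrRR\rbrack). You go considerably further by actually sketching the CSA verification for classes (4), (6), and (8) via Kurosh, Fuchsian fixed-set arguments, and Britton normal form; this is all sound and more informative than what the paper provides, but it exceeds what the paper itself supplies as proof.
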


The theorem follows from the fact that each of these classes has the property that each nonabelian group in them is CSA.

\smallskip

Since CSA always implies CT we have the following corollary.

\begin{corollary} Let $\X$ be a class of CSA groups. Then if $G$ is a nonabelian residually $\X$ group then CT is equivalent to CSA.
\end{corollary}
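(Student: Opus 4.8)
The plan is to read this off directly from Theorem 3.1. A class of CSA groups in particular has the property that every \emph{nonabelian} member is CSA, so the hypothesis of Theorem 3.1 is satisfied. Hence for any nonabelian residually $\X$ group $G$ the three conditions there are equivalent, and the equivalence of conditions (2) and (3) is exactly the assertion that, for such $G$, being CT is equivalent to being CSA.

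It is worth noting how the two implications split. The implication CSA $\Rightarrow$ CT is Lemma 2.2 and requires no residual hypothesis at all; it holds for every group. The content is therefore the reverse implication CT $\Rightarrow$ CSA, and this is where residual $\X$ (and the CSA-ness of the members of $\X$) enters. Concretely, one first uses the (3) $\Rightarrow$ (1) part of Theorem 3.1 --- the induction that, given finitely many nontrivial $g_1,\dots,g_n$, produces a single nontrivial $g$ whose survival forces all the $g_i$ to survive, the key step being that if every conjugate of $g_{n+1}$ commuted with $g$ then by commutative transitivity the normal closure of $g_{n+1}$ would be abelian, hence trivial by Lemma 3.2 --- to upgrade $G$ from residually $\X$ and CT to fully residually $\X$. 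Then the (1) $\Rightarrow$ (2) part shows that fully residually $\X$ forces $G$ to be CSA: take $u \ne 1$, $M = C_G(u)$ (maximal abelian by Lemma 2.1), and if $1 \ne g^{-1}ug \in g^{-1}Mg \cap M$ with $g \notin M$, separate $g^{-1}ug$ and $[g,u]$ simultaneously in some $H \in \X$; malnormality of the maximal abelian subgroups of $H$ then contradicts $[\bar g, \bar u] \ne 1$.

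I do not expect any real obstacle here: the corollary is a repackaging of Theorem 3.1, the only thing to check being that ``$\X$ a class of CSA groups'' does supply the hypothesis ``every nonabelian $H \in \X$ is CSA'' --- it does, trivially, and in fact the abelian members of $\X$ play no role, since the existence of a nonabelian residually $\X$ group already rules out $\X$ consisting only of abelian groups. So the one-line proof ``immediate from Theorem 3.1'' is the honest one; the second paragraph above merely records which internal pieces of that theorem are doing the work.
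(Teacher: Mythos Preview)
Your proposal is correct and matches the paper's own approach: the corollary is stated immediately after Theorem~3.1 with only the remark ``Since CSA always implies CT we have the following corollary,'' i.e., it is read off directly from the equivalence of (2) and (3) in that theorem. Your additional paragraph unpacking which parts of Theorem~3.1 do the work is accurate but goes beyond what the paper itself records.
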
   

Commutative transitivity (CT) has been shown to be equivalent to many other properties (see[AgrRR]) under the additional condition that abelian subgroups are locally cyclic (ALC) Hence we get the corollary.

\begin{corollary} Let $\X$ be a class of groups such that each nonabelian $H \in \X$ is CSA. Let $\mathcal{Y}$ be the subclass of $\X$ consisting of those groups in $\X$ which are ALC. Let $G$ be a nonabelian residually $\mathcal{Y}$ group which is ALC and has trivial center.  Then the following are equivalent.

$\hphantom{xx}$ (1) $G$ is fully residually $\mathcal{Y}$. 

$\hphantom{xx}$ (2) $G$ is CSA.

$\hphantom{xx}$ (3) $G$ is CT.

$\hphantom{xx}$ (4) $G$ is PC.

$\hphantom{xx}$ (5) $G$ is PT.
\end{corollary}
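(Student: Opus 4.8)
The plan is to deduce the Corollary from Theorem~3.1 together with the equivalences recorded in [AgrRR]. First I would observe that $\mathcal{Y}$, being by definition a subclass of $\X$, still has the property that each of its nonabelian members is CSA. Since $G$ is nonabelian and residually $\mathcal{Y}$, Theorem~3.1 applies verbatim with $\mathcal{Y}$ in place of $\X$, and this immediately gives the equivalence of (1), (2) and (3).

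Next I would bring in the power conditions. The hypotheses that $G$ is ALC and centerless are precisely those under which [AgrRR] shows that CT, PC and PT are mutually equivalent: when every abelian subgroup is locally cyclic, the centralizer of a nontrivial element is abelian exactly when $G$ is CT (Lemma~2.1), hence locally cyclic, and in a centerless group a locally cyclic centralizer is what makes the implication $[x,y^n]=1 \Rightarrow [x,y]=1$ and the transitivity of the power relation both follow from, and yield, commutative transitivity. Quoting [AgrRR] then gives (3) $\Leftrightarrow$ (4) $\Leftrightarrow$ (5), and chaining the two blocks of equivalences completes the proof.

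The step that I expect to need the most care --- the main obstacle, such as it is --- is checking that the hypotheses of the [AgrRR] equivalences are genuinely satisfied by $G$ itself and not merely by the groups in $\mathcal{Y}$: one must confirm that ``ALC together with trivial center'' is the exact form in which those equivalences are stated, paying attention to the fact that nothing here forces $G$ to be torsion-free, so the passage between the power-commutative implication and transitivity of the power relation must be robust against torsion. If [AgrRR] cannot be invoked in precisely the form needed, I would instead prove CT $\Rightarrow$ PC $\Rightarrow$ PT $\Rightarrow$ CT directly from Lemma~2.1 and the ALC hypothesis, which is short once one knows that centralizers of nontrivial elements are locally cyclic. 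Apart from that, the proof is a routine assembly of Theorem~3.1 and the cited results.
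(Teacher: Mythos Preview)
Your proposal is correct and matches the paper's own approach exactly: the paper simply states that the corollary ``follows directly from the equivalences given in [AgrRR],'' which amounts to invoking Theorem~3.1 for the equivalence of (1), (2), (3) and then citing [AgrRR] for the equivalence of (3), (4), (5) under the ALC and trivial-center hypotheses. Your additional remarks about verifying the hypotheses of [AgrRR] are sensible due diligence but go beyond what the paper itself supplies.
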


This follows directly from the equivalences given in [AgrRR].

\section{Some Results on Closure} Here we show that classes of groups satisfying $B\X$ are closed under certain amalgam constructions.  This follows from the fact that CT and hence CSA is preserved under such constructions.

\begin{proposition} Let $\X$ be a class of CSA groups closed under free products with malnormal amalgamated subgroups, in particular under free products. Let $G_1$ and $G_2$ be nonabelian, residually $\X$ and CSA. Then $G_1 \star G_2$ and $G_1 \star_A G_2$ where $A$ is malnormal in $G_1$ and $G_2$ are both residually $\X$ and CSA.
\end{proposition}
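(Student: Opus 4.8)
The plan is to establish the two conclusions --- residual $\X$-ness and the CSA property --- separately; once both hold, ``fully residually $\X$'' re-follows automatically from Theorem 3.1 since $G_1 \star G_2$ and $G_1 \star_A G_2$ are visibly nonabelian, although this is not needed here. The one preliminary point I would record first is that, by Theorem 3.1 applied to the class $\X$, the hypotheses on each $G_i$ (nonabelian, residually $\X$, CSA) already force $G_i$ to be \emph{fully} residually $\X$.

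For residual $\X$-ness of $G_1 \star G_2$ I would argue with normal forms. Given $1 \neq w \in G_1 \star G_2$, write $w = c_1 c_2 \cdots c_k$ in reduced form, the $c_j$ alternately in $G_1 \setminus \{1\}$ and $G_2 \setminus \{1\}$. If $k = 1$, say $w \in G_1$, pick $\phi_1 : G_1 \to H_1 \in \X$ with $\phi_1(w) \neq 1$ and any $\phi_2 : G_2 \to H_2 \in \X$; then $\phi_1 \star \phi_2$ maps $G_1 \star G_2$ into $H_1 \star H_2 \in \X$ (closure under free products) and does not kill $w$. If $k \geq 2$, full residual $\X$-ness of $G_1$ produces $\phi_1 : G_1 \to H_1 \in \X$ nontrivial on every $c_j$ lying in $G_1$, and similarly $\phi_2$; then $(\phi_1 \star \phi_2)(w)$ is again a reduced word of length $k$ in $H_1 \star H_2 \in \X$, hence $\neq 1$. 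For $G_1 \star_A G_2$ the scheme is the same, but the two factor maps must now agree on $A$: I would first pass to a common quotient $\overline A = A/K$ such that $K$ lies in the kernels of quotient maps $G_i \to H_i \in \X$ that are nontrivial on the (finitely many) syllables of $w$ \emph{and} separate those syllables from $A$, arrange --- using Lemma 2.3 and the CSA structure of the $G_i$ --- that $\overline A$ is malnormal in each $H_i$, and then form $H_1 \star_{\overline A} H_2 \in \X$ by closure under malnormal amalgamation; the image of $w$ is then a reduced word of length $\geq 2$ in this amalgam, hence nontrivial.

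For the CSA property I would first note that a nonabelian CSA group $G$ has no $2$-torsion: if $t^2 = 1 \neq t$, then $M := C_G(t)$ is maximal abelian (Lemmas 2.1 and 2.2) and therefore malnormal, and for any $g \notin M$ the subgroup $\langle t, gtg^{-1}\rangle$ is dihedral of some order $2n$; but $n = \infty$ contradicts Lemmas 2.3 and 2.4, $n \geq 3$ makes this $D_n$ fail ANC --- its rotation subgroup is abelian normal but not central --- hence fail CSA by Lemma 3.1, again contradicting Lemma 2.3, while $n \leq 2$ would put $gtg^{-1}$ inside $M$, contradicting malnormality. Since finite-order elements of a free product, resp. of an amalgamated product, are conjugate into a factor, $G_1 \star G_2$ and $G_1 \star_A G_2$ are therefore $2$-torsion-free, hence NID. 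I would then analyze the maximal abelian subgroups of the amalgam via its Bass--Serre tree: each is either a conjugate of a maximal abelian subgroup of a factor or an infinite cyclic subgroup generated by an element acting hyperbolically on the tree; malnormality of the former follows from the CSA property of the factors together with the malnormality of the edge group ($A$, or the trivial group in the free-product case), and malnormality of the latter uses the absence of $2$-torsion, which rules out an element inverting the axis. Hence every maximal abelian subgroup of the amalgam is malnormal, i.e.\ the amalgam is CSA. Alternatively one simply invokes the known facts that an NID free product of CSA groups is CSA and that an NID amalgam of CSA groups over a subgroup malnormal in both factors is CSA.

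The step I expect to be the main obstacle is the amalgamated case: both verifying malnormality of all maximal abelian subgroups of $G_1 \star_A G_2$ carefully enough, and --- for residual $\X$-ness --- producing compatible quotients of $G_1$ and $G_2$ that agree on $A$, remain nontrivial on the syllables of $w$, separate those syllables from $A$, and still lie in $\X$ with the amalgamated subgroup malnormal. The free-product case is routine by comparison, once Theorem 3.1 has been used to upgrade ``residually $\X$'' to ``fully residually $\X$''.
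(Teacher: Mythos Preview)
Your argument for residual $\X$-ness of $G_1 \star G_2$ is exactly the paper's: upgrade each $G_i$ to fully residually $\X$ via Theorem~3.1, then push a reduced word through $\phi_1 \star \phi_2$ into $H_1 \star H_2 \in \X$. The difference lies in how you obtain CSA. The paper does not analyze maximal abelian subgroups at all; instead it observes that $G_1 \star G_2$ is CT, citing the result of Levin--Rosenberger that a free product of CT groups is CT, and then invokes the equivalence CT $\Leftrightarrow$ CSA for nonabelian residually-$\X$ groups (Corollary~3.1). This is considerably shorter than your route through $2$-torsion and the Bass--Serre tree, though your argument has the merit of being self-contained and of making explicit why the infinite dihedral obstruction is absent.

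For the amalgamated case the paper says only that the proof is ``analogous,'' so in fact your proposal is more detailed than the paper here. Your worry about producing factor quotients that agree on $A$, separate the syllables from $A$, and keep the image of $A$ malnormal is the genuine content of that case; the paper does not address it. If one is willing to accept the analogous CT-preservation statement for malnormal amalgams, the paper's shortcut again bypasses your Bass--Serre analysis, but the residual-$\X$ step still needs the compatibility argument you sketch.
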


\begin{proof} We show it for free products, as the proof for free products with malnormal amalgamation is analogous. Let $G_1$ and $G_2$ be nonabelian, residually $\X$ and CSA.  We show that $G_1 \star G_2$ is also residually $\X$ and CSA.

From Theorem 3.1 since $G_1$ and $G_2$ are nonabelian, residually $\X$ and CSA they are both fully residually $\X$ and CT. Let $g \in G_1 \star G_2$ with $g \notin G_1$ and $g \notin G_2$..  Then, up to conjugacy,
$$g = a_1b_1a_2b_2 \cdots a_nb_n$$
where $a_1,...,a_n,b_1,...,b_n$, the syllables of $g$, are nontrivial elements of $G_1,G_2$ respectively.  Then there exist
maps $\phi_{G_1}:G_1 \rightarrow H_1$ and $\phi_{G_2}:G_2 \rightarrow H_2$
with $H_1,H_2$ in $\X$ which do not annihilate any of the syllables
in $g$.  This can then be extended to a map $\phi_g:G \rightarrow
H_1 \star H_2$ which does not annihilate $g$. Since $\X$ is closed under free products this shows that $G$ is
residually $\X$. Further a free product of CT groups is CT (see [LR]) hence $G_1 \star G_2$ is also CT.  Since it is residually $\X$ and CT it is CSA and fully residually $\X$.

\end{proof}

The following then follow easily from this proposition.

\begin{corollary} Let $\X$ be a class of nonabelian CSA groups. Hence $\X$ satisfies $B\X$. Let $G_1$ and $G_2$ be fully residually $\X$ groups and $G = G_1 \star G_2$.  Then $G$ is fully residually $\X$.
\end{corollary}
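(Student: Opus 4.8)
The plan is to reduce the statement to the ``residually $\X$'' part of the proof of Proposition 4.1; for an ordinary (unamalgamated) free product that part already yields the full conclusion, so the argument is short and self-contained. Since a fully residually $\X$ group is in particular residually $\X$ (take $n=1$ in the definition), $G_1$ and $G_2$ are residually $\X$. If either factor is trivial then $G$ equals the other and there is nothing to prove, so I assume both are nontrivial; then $G = G_1\star G_2$ is nonabelian, since for nontrivial $a\in G_1$ and $b\in G_2$ the commutator $[a,b]=aba^{-1}b^{-1}$ is a reduced word of syllable length four and hence $\ne 1$.

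Next, given finitely many nontrivial elements $g_1,\dots,g_k\in G$, I would write each $g_j$ in its normal form as a reduced word over $G_1\star G_2$ and collect the finitely many syllables that occur, say $c_1,\dots,c_p\in G_1\setminus\{1\}$ and $d_1,\dots,d_q\in G_2\setminus\{1\}$. Because $G_1$ is fully residually $\X$ there is a homomorphism $\phi_1\colon G_1\to H_1\in\X$ with $\phi_1(c_i)\ne 1$ for all $i$, and similarly $\phi_2\colon G_2\to H_2\in\X$ with $\phi_2(d_l)\ne 1$ for all $l$; by the universal property of the free product these combine to $\Phi\colon G\to H_1\star H_2$. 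Since $\Phi$ carries the normal form of each $g_j$ to a reduced word over $H_1\star H_2$ of the same syllable length with every syllable nontrivial, $\Phi(g_j)\ne 1$ for all $j$, and $H_1\star H_2\in\X$ because $\X$ is closed under free products. This closure property, which is exactly the hypothesis on $\X$ in Proposition 4.1, is the only thing about $\X$ that the argument uses. Hence $G$ is fully residually $\X$.

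If one instead prefers to invoke Proposition 4.1 directly, the steps are: fully residually $\X$ gives residually $\X$ for each factor; Theorem 3.1 (valid because every nonabelian member of $\X$ is CSA) gives CSA for the nonabelian factors; Proposition 4.1 gives that $G$ is residually $\X$ and CSA; and a second application of Theorem 3.1, now to the nonabelian group $G$, gives that $G$ is fully residually $\X$. The one place where care is needed — and the only real obstacle — is that a factor may be abelian and yet fully residually $\X$ (for instance $\Z$ is fully residually free, although free groups are nonabelian CSA), so Proposition 4.1 does not apply on the nose. The direct argument above avoids the issue entirely, since it never uses nonabelianity, CSA, or CT of the factors; alternatively one can note that the proof of Proposition 4.1 really only needs each factor to be fully residually $\X$, with the abelian factors being trivially CT.
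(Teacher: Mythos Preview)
Your proposal is correct. The paper's intended derivation is exactly the second route you outline: pass from fully residually $\X$ to residually $\X$, invoke Theorem~3.1 to get CSA on the factors, apply Proposition~4.1 to obtain $G$ residually $\X$ and CSA, and use Theorem~3.1 again on $G$. Your first, direct argument---collecting the syllables of $g_1,\dots,g_k$ and mapping simultaneously into $H_1\star H_2$---is a genuinely different and more elementary route: it never invokes CT, CSA, or the $B\X$ machinery, and it sidesteps the abelian-factor subtlety you rightly flag (e.g.\ $\Z$ fully residually free but abelian), where Proposition~4.1 as stated does not literally apply. What the paper's route buys is an illustration of the $B\X$ principle in action; what yours buys is a self-contained proof that works uniformly for all factors. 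Note also that both arguments need $\X$ closed under free products, a hypothesis carried by Proposition~4.1 but not restated in the corollary; you are right to make this explicit.
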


\begin{corollary} Let $\X$ be a class of nonabelian CSA groups. Hence $\X$ satisfies $B\X$. Consider the class of groups which are free products $G_1 \star G_2$ of groups from $\X$. Then this class also satisfies $B\X$.
\end{corollary}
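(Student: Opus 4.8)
The plan is to apply Theorem 3.1 to the class $\mathcal{Y}$ consisting of all free products $G_1 \star G_2$ with $G_1, G_2 \in \X$. By that theorem (read with $\mathcal{Y}$ in place of $\X$), it suffices to check that every nonabelian member of $\mathcal{Y}$ is CSA; then a nonabelian group is fully residually $\mathcal{Y}$ iff it is residually $\mathcal{Y}$ and CT, which is precisely $B\mathcal{Y}$. Note that we cannot simply invoke Proposition 4.1, since $\X$, and hence $\mathcal{Y}$, need not be closed under free products; also residual $\mathcal{Y}$-ness of the members of $\mathcal{Y}$ is automatic. Since a free product of two nontrivial groups is never abelian, the whole task reduces to the structural claim: \emph{if $G_1$ and $G_2$ are nonabelian CSA groups then $G_1 \star G_2$ is CSA.}

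First I would record the auxiliary fact that a nonabelian CSA group $G$ has no element of order $2$. Suppose $a \in G$ has order $2$; I claim $a$ is central. For $g \notin C_G(a)$ put $h = (gag^{-1})a$; a one-line computation gives $aha^{-1} = h^{-1}$. If $h$ had infinite order, then, $h^{-1} \in \langle h\rangle$ being a nontrivial element of $a\langle h\rangle a^{-1} \cap \langle h\rangle$, malnormality of the maximal abelian subgroup containing $h$ would force $a$ into it, contradicting $aha^{-1} = h^{-1} \ne h$; hence $h^2 = 1$, which unwinds to $a(gag^{-1}) = (gag^{-1})a$. So $a$ commutes with all its conjugates, whence the normal closure of $a$ is abelian and therefore, by Lemma 3.1, central; so $a$ is central, $C_G(a) = G$ is abelian (Lemma 2.2), contradicting nonabelianness. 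Consequently no group in $\X$, and in particular neither factor $G_1$ nor $G_2$, contains an involution.

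Next I would prove $G_1 \star G_2$ is CSA by running through its maximal abelian subgroups, which (by the Kurosh subgroup theorem) are of two kinds: conjugates of maximal abelian subgroups of $G_1$ or $G_2$, and infinite cyclic subgroups $\langle w\rangle$ with $w$ cyclically reduced of length $\ge 2$ and not a proper power. For a conjugate $xMx^{-1}$ of a maximal abelian $M \le G_i$, a nontrivial element of $g(xMx^{-1})g^{-1} \cap xMx^{-1}$ lies in $G_i \cap (x^{-1}gx)G_i(x^{-1}gx)^{-1}$; the standard fact $G_i \cap yG_iy^{-1} = \{1\}$ for $y \notin G_i$ forces $x^{-1}gx \in G_i$, and then malnormality of $M$ in $G_i$ forces $g \in xMx^{-1}$. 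For $\langle w\rangle$, $gw^ng^{-1} = w^m$ with $m \ne 0$ forces $w$ conjugate to $w^{\pm 1}$; the case $gwg^{-1} = w^{-1}$ gives $g^2 \in C(w) = \langle w\rangle$ and then, comparing $g$-conjugation on $g^2$, $g^2 = 1$ — impossible since $G_1 \star G_2$ has no $2$-torsion (its torsion is conjugate into the factors, and neither $G_i$ has an involution) — so $g \in \langle w\rangle$. Hence all maximal abelian subgroups are malnormal and $G_1 \star G_2$ is CSA. (Alternatively one may quote directly from [FGMRS] or [GKM] that a free product of CSA groups is CSA exactly when it contains no copy of $\Z_2 \star \Z_2$, and combine this with the fact above that neither $G_i$ has an involution.)

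Finally, every member of $\mathcal{Y}$ is then a nonabelian CSA group, so Theorem 3.1 applies to $\mathcal{Y}$ and gives $B\mathcal{Y}$. The main obstacle is the structural claim of the third paragraph — specifically the malnormality check for the infinite cyclic maximal abelian subgroups, where the absence of $2$-torsion (equivalently, the absence of $\Z_2 \star \Z_2$) is exactly what is needed to rule out an element being conjugate to its inverse. If this is instead cited from the literature, the remaining, genuinely new, ingredient is just the observation that nonabelian CSA groups have no $2$-torsion.
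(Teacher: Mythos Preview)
Your proposal is correct and in fact more careful than the paper, which gives no argument beyond saying the corollary ``follows easily from Proposition~4.1.'' You rightly note that Proposition~4.1 does not apply as stated: its hypothesis that $\X$ be closed under free products is absent here, and the step in its proof that upgrades CT to CSA (via Corollary~3.1) needs $G_1\star G_2$ to be residually~$\X$, which can fail---for instance a commutator $[a,b]$ with $a\in G_1$, $b\in G_2$ dies under both retractions $G_1\star G_2\to G_i$. Your route instead applies Theorem~3.1 directly to the class $\mathcal{Y}$ of such free products, reducing everything to the structural claim that $G_1\star G_2$ is CSA whenever $G_1,G_2$ are nonabelian CSA. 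You establish this by first showing that a nonabelian CSA group has no involutions (so the free product is NID) and then checking malnormality of the two kinds of maximal abelian subgroups by hand, with the absence of $2$-torsion disposing of the $gwg^{-1}=w^{-1}$ case; alternatively you cite [GKM]. This is the honest way to fill the gap: the paper's one-line derivation tacitly presumes exactly the CSA-closure under free products that your argument supplies, while what Proposition~4.1 actually proves (that the free product is CT, via [LR]) is insufficient by itself since $\Z_2\star\Z_2$ is CT but not CSA.
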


\begin{corollary} Let $\X$ be a class of nonabelian CSA groups. Hence $\X$ satisfies $B\X$.Consider the class of groups which have the form $G_1 \star_A G_2$ where $G_1,G_2$ are groups from $\X$ and $A$ is malnormal in $G_1$ and $G_2$. Then this class also satisfies $B\X$.
\end{corollary}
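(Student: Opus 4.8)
Throughout, write $\mathcal{Y}$ for the class of groups $G_1 \star_A G_2$ with $G_1,G_2 \in \X$ and $A$ malnormal in both factors. The plan is to reduce the assertion to the already-established fact that $\X$ satisfies $B\X$ (Theorem 3.1), by showing that passing to the amalgam class enlarges neither the residual nor the fully residual closure. The first step is to record that every nonabelian member $Y = G_1 \star_A G_2$ of $\mathcal{Y}$ is itself residually $\X$ and CSA: since $G_1,G_2 \in \X$ they are nonabelian and CSA, hence trivially residually $\X$ and CSA, so Proposition 4.1 applies and yields that $Y$ is residually $\X$ and CSA. Being nonabelian, residually $\X$ and CSA, $Y$ is then fully residually $\X$ by Theorem 3.1. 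This upgrade from ``residually $\X$'' to ``fully residually $\X$'' for members of $\mathcal{Y}$ is what makes the composition argument below work.

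The heart of the proof is then the pair of closure coincidences: a group is residually $\mathcal{Y}$ iff it is residually $\X$, and fully residually $\mathcal{Y}$ iff it is fully residually $\X$. For the directions ``residually (resp. fully residually) $\mathcal{Y} \Rightarrow$ residually (resp. fully residually) $\X$'', I would take a separating map $\psi : G \to Y \in \mathcal{Y}$ and compose it with a map $Y \to H \in \X$ supplied by the first step, using that $Y$ is residually, indeed fully residually, $\X$; a composite of (full) separations is again a (full) separation. For the reverse directions I would fix a nonabelian $H' \in \X$ and observe that for any $H \in \X$ the free product $H \star H' = H \star_{\{1\}} H'$ lies in $\mathcal{Y}$ (the trivial subgroup is vacuously malnormal, so this is the free-product case already admitted in Proposition 4.1), while the canonical map $H \hookrightarrow H \star H'$ is injective. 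Composing a separating map $G \to H \in \X$ with this embedding produces a map into a member of $\mathcal{Y}$ that preserves the separation, so residually $\X \Rightarrow$ residually $\mathcal{Y}$ and likewise for the full versions.

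With both coincidences in hand the conclusion is immediate. For a nonabelian group $G$ one chains the equivalences: $G$ is fully residually $\mathcal{Y}$ iff $G$ is fully residually $\X$, iff (because $\X$ satisfies $B\X$ by Theorem 3.1) $G$ is residually $\X$ and CT, iff $G$ is residually $\mathcal{Y}$ and CT. Thus the Baumslag equivalence holds for the amalgam class with the reference class still $\X$, which is exactly the statement that $\mathcal{Y}$ satisfies $B\X$; note this is genuinely the $B\X$ equivalence and not merely $B\mathcal{Y}$, the point being the identification of the $\mathcal{Y}$-closures with the $\X$-closures.

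I expect the main obstacle to be the reverse closure containments, i.e. realizing arbitrary members of $\X$ inside members of $\mathcal{Y}$; the clean resolution is the free-product embedding above, which needs only that $\X$ be nonempty (so that a nonabelian $H'$ exists) and that free products be counted in $\mathcal{Y}$ as the degenerate malnormal amalgam $A = \{1\}$, consistent with Proposition 4.1. A secondary point to verify is that each amalgam $Y$ is genuinely nonabelian with $A$ proper in both factors, so that Theorem 3.1 may legitimately be invoked to pass from ``residually $\X$ and CSA'' to ``fully residually $\X$''; this is precisely where the malnormality hypothesis on $A$, which guarantees via Proposition 4.1 that $Y$ is CSA, does the real work.
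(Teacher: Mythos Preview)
Your argument takes a detour that the paper avoids, and that detour contains a real gap.

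\textbf{The gap.} You invoke Proposition~4.1 to conclude that each $Y=G_1\star_A G_2\in\mathcal{Y}$ is residually~$\X$ (and hence, via Theorem~3.1, fully residually~$\X$). But Proposition~4.1 carries the hypothesis that $\X$ be \emph{closed under free products with malnormal amalgamated subgroups}, and Corollary~4.3 makes no such assumption on~$\X$. Without closure, the proof of Proposition~4.1 breaks precisely at the point where the separating map lands in $H_1\star_A H_2$ and one needs this target to lie in~$\X$. In general $G_1\star_A G_2$ need not be residually~$\X$ when $\X$ is an arbitrary class of nonabelian CSA groups, so your closure coincidence ``residually~$\mathcal{Y}\Rightarrow$ residually~$\X$'' (and its full version) is unjustified, and the whole reduction to $B\X$ collapses.

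\textbf{The paper's route.} The paper's corollaries ``follow easily from this proposition'' in a much more direct sense: the only thing one extracts from Proposition~4.1 (or from the underlying structural fact, cf.\ [GKM], [LR]) is that an amalgam of CSA groups over a malnormal subgroup is again CSA. Thus every nonabelian member of $\mathcal{Y}$ is CSA, and Theorem~3.1 applied \emph{with $\mathcal{Y}$ in the role of~$\X$} immediately gives that $\mathcal{Y}$ satisfies the Baumslag property. No comparison of the $\X$- and $\mathcal{Y}$-residual closures is needed at all. Incidentally, ``this class also satisfies $B\X$'' in the statement is using $B\X$ as the generic name of the property; what is being asserted is $B\mathcal{Y}$, i.e.\ for nonabelian $G$: fully residually~$\mathcal{Y}$ $\iff$ residually~$\mathcal{Y}$ and CT. Your closing remark that you have proved ``genuinely the $B\X$ equivalence and not merely $B\mathcal{Y}$'' misreads the target; the chain you wrote, when valid, yields exactly $B\mathcal{Y}$.
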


\section{Universally $\X$-groups and the Big Powers Condition}

We now consider the equivalence with universally $\X$ groups. We say that a group $G$ is {\bf universally} $\X$ if it satisfies the universal theory of a countable nonabelian group from $\X$. Recall that if $\X$ is the class of free groups we have the following equivalence mentioned in the introduction.

\begin{theorem} Suppose $G$ is residually free.  Then the following are equivalent:

$\hphantom{xx}$ (1) $G$ is fully residually free, 

$\hphantom{xx}$ (2) $G$ is commutative transitive, 

$\hphantom{xx}$ (3) $G$ is universally free. 
\end{theorem}

Our Theorem 3.1 shows the equivalence of (1) and (2) for any class $\X$ of CSA groups.    
To prove an equivalence with (3) we need the {\bf big powers condition}. This was introduced originally by G.Baumslag in [GB].

\begin{definition} Let $G$ be a group and $u = (u_1,...,u_k)$ be a sequence of nontrivial elements of $G$. Then 

$\hphantom{xx}$ (1) $u$ is {\bf generic} if neighboring elements in $u$ do not commute, that is $[u_1,u_{i+1}] \ne 1$ for every $i \in \{1,...,k\}$.  

$\hphantom{xx}$ (2) $u$ is {\bf independent} if there exists an $n = n(u) \in \Bbb{N}$ such that for any $\alpha_1,...,\alpha_k \ge n$ we have $u_1^{\alpha_1} \cdots u_k^{\alpha_k} \ne 1$. 

$\hphantom{xx}$ (3) A group satisfies the {\bf big powers condition} or {\bf BP} if every generic sequence in $G$ is independent.  We call such groups $BP$-groups.
\end{definition}

 G. Baumlsag proved that free groups are BP-groups [GB] while Olshansky [O] showed that torsion-free hyperbolic groups are BP-groups. For BP groups the following results are known.

\begin{lemma} [KMS] A subgroup of a BP-group is itself a BP-group.
\end{lemma}

\begin{lemma} [O] Every torsion-free hyperbolic group is a BP-group
\end{lemma}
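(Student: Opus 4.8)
The plan is to argue geometrically in a Cayley graph of $G$, converting the non-commutation of consecutive terms of a generic sequence into a ``broken geodesic'' whose far endpoint is forced away from the identity once all the exponents are large.

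First I would fix a finite generating set $S$, let $X$ be the associated Cayley graph, and pick $\delta$ with $X$ being $\delta$-hyperbolic. Since $G$ is torsion-free, every $g \neq 1$ has infinite order and therefore acts on $X$ as a loxodromic isometry: it has positive translation length $\tau(g)$, a pair of distinct fixed points $g^{\pm\infty} \in \partial X$, and a quasi-axis on which the powers $g^m$ trace a quasi-geodesic with constants depending only on $\delta$ and $|S|$. I would also record the structural fact that in a torsion-free hyperbolic group the centralizer of any nontrivial element is infinite cyclic; hence for nontrivial $a,b$ one has $[a,b]=1$ if and only if $\langle a,b\rangle$ is cyclic, if and only if $\{a^{+\infty},a^{-\infty}\} = \{b^{+\infty},b^{-\infty}\}$.

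Next, given a generic sequence $u = (u_1,\dots,u_k)$ (the case $k=1$ being immediate since $u_1$ has infinite order), I would use genericity, $[u_j,u_{j+1}] \neq 1$ for consecutive $j$, together with the previous paragraph, to conclude that the quasi-axes of $u_j$ and $u_{j+1}$ have distinct endpoint pairs, so they fellow-travel only for some bounded length $T_j$ and then diverge. For exponents $\alpha_1,\dots,\alpha_k$ I would set $g_0 = 1$, $g_j = u_1^{\alpha_1}\cdots u_j^{\alpha_j}$, take geodesics $\gamma_j$ from $g_{j-1}$ to $g_j$, and apply the standard local-to-global lemma for hyperbolic spaces: there exist $L = L(\delta)$ and a ``no short backtracking'' threshold $\theta = \theta(\delta)$ such that a concatenation $\gamma_1\gamma_2\cdots\gamma_k$ of geodesic segments is a uniform quasi-geodesic as soon as each $\gamma_j$ has length $\geq L$ and no $\gamma_{j+1}$ retraces $\gamma_j$ within distance $\theta$ of $g_j$. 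I would then choose $n = n(u)$ large enough (in terms of $\delta$, the $\tau(u_j)$, the quasi-axis constants, and the finitely many divergence lengths $T_j$) so that for all $\alpha_j \geq n$ each $\gamma_j$ has length comparable to $\alpha_j\tau(u_j) \geq L$ and the turn at each $g_j$ is sharp enough, the latter because once $\alpha_{j+1} > T_j$ the axis of $u_{j+1}$ has already left the axis of $u_j$. The conclusion is that $\gamma_1\cdots\gamma_k$ is a quasi-geodesic of positive length, so $g_k \neq 1$; that is, $u$ is independent, and $G$ is a $BP$-group.

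The hard part will be making the ``sharp turn at $g_j$'' hypothesis uniform: two non-commuting elements of a hyperbolic group may still fellow-travel for a long (though finite) time near their axes, so one must first extract the divergence bounds $T_j$ and only afterwards fix $n(u)$. I expect the cleanest route is to replace each segment $\gamma_j$ by a long subsegment of the genuine quasi-axis of $u_j$ and to shuttle between the two using quasi-geodesic stability (the Morse lemma), which is precisely what converts ``distinct endpoints on $\partial X$'' into a definite lower bound on the turning. Ordering the chain of constants correctly --- $\delta$, the quasi-axis constants, the Morse constant, the threshold $L$, and finally $n(u)$ --- is where essentially all of the work sits; the conceptual content (non-commuting $\Rightarrow$ transverse axes $\Rightarrow$ broken concatenation is a quasi-geodesic $\Rightarrow$ nontrivial) is short, and torsion-freeness enters only to force every nontrivial element to be loxodromic with cyclic centralizer.
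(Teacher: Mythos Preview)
The paper does not prove this lemma; it simply quotes Olshanskii \lbrack O\rbrack\ as a black box. So there is no ``paper's proof'' to compare against. What you have written is a correct outline of the standard geometric argument, and it is essentially the argument Olshanskii gives: push each $u_j^{\alpha_j}$ along its quasi-axis, use non-commutation of neighbours to bound the Gromov product at the break-points, and apply the local-to-global quasi-geodesic criterion. Your identification of the delicate step (bounding the fellow-travelling constants $T_j$ before choosing $n(u)$) is exactly right.

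One small point worth making explicit when you write this up: from $[u_j,u_{j+1}]\neq 1$ you need not merely that the endpoint \emph{pairs} $\{u_j^{\pm\infty}\}$ and $\{u_{j+1}^{\pm\infty}\}$ differ, but specifically that $u_j^{-\infty}\neq u_{j+1}^{+\infty}$, since the turn at $g_j$ is governed by the Gromov product $(u_j^{-\alpha_j}\mid u_{j+1}^{\alpha_{j+1}})_1$. This follows from the standard fact that two loxodromic isometries of a hyperbolic space acting properly (as elements of a hyperbolic group do) share either both fixed points or none; sharing exactly one would produce a parabolic-type configuration that cannot occur. Once that is recorded, your chain of constants goes through cleanly.
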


A stronger version of this lemma for relatively hyperbolic groups is given in [KM].

\begin{lemma} A free product of CSA BP-groups is also a BP-group
\end{lemma}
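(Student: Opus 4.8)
The plan is to take $G = A \star B$ where $A$ and $B$ are CSA $BP$-groups and show that every generic sequence $u = (u_1, \dots, u_k)$ in $G$ is independent. First I would reduce to the case of \emph{cyclically reduced} syllable-form elements: conjugating the whole sequence $u$ by a fixed $g \in G$ replaces $u_1^{\alpha_1} \cdots u_k^{\alpha_k}$ by $g^{-1}(u_1^{\alpha_1} \cdots u_k^{\alpha_k})g$, which is trivial if and only if the original is, and the genericity hypothesis $[u_i, u_{i+1}] \ne 1$ is conjugation-invariant; so I may assume each $u_i$ is written in reduced syllable form. The key dichotomy is then between those $u_i$ that are conjugate (within $G$) into a factor and those that are not (the latter have syllable length $\geq 2$ after cyclic reduction and are ``hyperbolic'' for the Bass--Serre tree).

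The heart of the argument is the normal-form bookkeeping for powers. For a factor-conjugate $u_i = g_i c_i g_i^{-1}$ with $c_i \in A \cup B$, the power $u_i^{\alpha_i} = g_i c_i^{\alpha_i} g_i^{-1}$, and since $c_i \ne 1$ and $A, B$ are $BP$ (hence in particular have no elements of order dividing all large integers — any torsion here is harmless since $c_i^{\alpha_i} \ne 1$ for suitable $\alpha_i$, and by the $BP$ condition in the factor applied to the length-one sequence $(c_i)$, $c_i^{\alpha_i} \ne 1$ for all large $\alpha_i$), the powers stay nontrivial. For a non-factor-conjugate $u_i$, one shows that high powers $u_i^{\alpha_i}$ grow in syllable length roughly linearly in $\alpha_i$ and, crucially, have a \emph{stable} initial and terminal syllable. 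Then I would multiply the pieces together and track cancellation at the junctions between $u_i^{\alpha_i}$ and $u_{i+1}^{\alpha_{i+1}}$: genericity $[u_i, u_{i+1}] \ne 1$ must be leveraged, via the CSA hypothesis, to guarantee that no catastrophic cancellation occurs. Specifically, if consecutive hyperbolic elements $u_i$, $u_{i+1}$ shared the same axis endpoints they would commute (two hyperbolic isometries of a tree with the same axis that both preserve orientation commute up to the stabilizer, and CSA forces the relevant edge/vertex stabilizer interactions to collapse), contradicting genericity; and the interaction of a hyperbolic $u_i$ with a factor-conjugate $u_{i+1}$ is controlled by malnormality of the maximal abelian subgroups in the factors. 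After choosing $n = n(u)$ large enough to dominate all the factor-wise thresholds and all the ``stabilization'' thresholds for the hyperbolic $u_i$, the product $u_1^{\alpha_1} \cdots u_k^{\alpha_k}$ is a nonempty reduced word of positive syllable length, hence $\ne 1$.

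The main obstacle I anticipate is the cancellation analysis at the junctions when one or more of the $u_i$ are conjugate into a factor and the conjugating elements $g_i$ are themselves long words — the ``pinch'' at $g_i^{-1} g_{i+1}$ can be large, and one must rule out that $c_i^{\alpha_i}$ is swallowed. This is exactly where CSA (equivalently, malnormality of maximal abelian subgroups, via Lemma 2.2 and Lemma 2.4) earns its keep: after absorbing the conjugators into the picture, the only way for the $c_i$-part to be consumed on both sides is for $c_i$ to lie in a conjugate of the abelian subgroup that also meets its own conjugate nontrivially, forcing the conjugator into that abelian subgroup and collapsing $[u_i, u_{i+1}]$ to $1$. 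I would organize this as a sequence of lemmas: (i) reduction to cyclically reduced generic sequences; (ii) power growth and syllable stabilization for hyperbolic elements; (iii) a ``no-total-cancellation at a junction'' lemma using CSA and genericity; (iv) assembling the threshold $n(u)$. The appeal to Lemma 5.1 (subgroups of $BP$-groups are $BP$) is what lets me use the factor-level $BP$ property freely on the subgroups generated by the $c_i$.
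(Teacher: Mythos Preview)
The paper does not actually supply a proof of this lemma: it is stated bare, with no argument and no explicit citation, in a block of background facts (Lemmas 5.1--5.4) that are quoted from the literature, presumably [KMS] or [KM]. So there is no in-paper proof to compare your proposal against.

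That said, your outline is the standard route and is essentially sound. A few points worth tightening. First, the aside about torsion is unnecessary and slightly muddled: a $BP$-group is automatically torsion-free, since a single nontrivial element $c$ forms a (vacuously) generic sequence of length one, and independence forces $c^{\alpha} \ne 1$ for all large $\alpha$. So your factors $A$, $B$ are torsion-free from the start. Second, be careful with the phrase ``reduction to cyclically reduced generic sequences'': you cannot replace each $u_i$ by a cyclically reduced conjugate without changing the product $u_1^{\alpha_1}\cdots u_k^{\alpha_k}$. What you actually do (and later describe correctly) is write each $u_i = g_i v_i g_i^{-1}$ with $v_i$ cyclically reduced (or $v_i \in A \cup B$ in the elliptic case) and then carry the conjugators $g_i$ into the junction analysis; step (i) should be phrased that way. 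Third, your closing appeal to Lemma 5.1 is not really needed: the $BP$ property of $A$ and $B$ is applied directly to generic sequences built from the elliptic $c_i$'s and the absorbed junction pieces, not to an auxiliary subgroup.

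The genuine work, as you correctly flag, is the junction lemma (your step (iii)). The case to watch is a maximal run of consecutive elliptic $u_i$ all conjugate into the \emph{same} factor via conjugators that differ by elements of that factor; there the entire run collapses to a single generic sequence inside the factor, and you invoke $BP$ of the factor on that longer sequence, not just on singletons. The CSA hypothesis is what guarantees that genericity in $G$ descends to genericity of that collapsed sequence in the factor (two commuting elliptics with overlapping fixed-point sets must lie in a common maximal abelian subgroup, and malnormality controls the conjugators). With that case handled, the mixed and hyperbolic junctions behave as you describe.
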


\begin{lemma} Let $G = F_1 \underset{u=v}{\star} F_2$ where $F_1,F_2$ are finitely generated free groups and $u,v$ are nontrivial elements of $F_1,F_2$ respectively with not both proper powers. Then $G$ is a CSA BP-group.
\end{lemma}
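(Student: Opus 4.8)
The plan is to derive both conclusions from the single structural fact that $G$ is a torsion-free hyperbolic group. Torsion-freeness is immediate: in an amalgamated free product every element of finite order is conjugate into a vertex group, and $F_1,F_2$ are free. Granting hyperbolicity, $G$ is then CSA --- this is precisely the property that, as noted after Theorem 3.2, places the class of torsion-free hyperbolic groups among the $B\X$-classes --- and $G$ is a BP-group by Lemma 5.2. So the whole content of the lemma is the hyperbolicity of $G$.

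To prove $G$ hyperbolic, first exploit the hypothesis. Since $u$ and $v$ are not both proper powers, after interchanging $F_1$ and $F_2$ if necessary we may assume $u$ is not a proper power in $F_1$; then the amalgamated subgroup $C=\langle u\rangle=\langle v\rangle$, viewed inside $F_1$, is a maximal cyclic subgroup of $F_1$ and hence malnormal in $F_1$. On the $F_2$ side we know only that $v$ lies in a unique maximal cyclic subgroup $\langle w\rangle$ of $F_2$, say $v=w^k$ with $k\ge 1$; then $\langle w\rangle$ is malnormal in $F_2$, but $C=\langle w^k\rangle$ need not be when $k\ge 2$. If $u$ happens to be primitive in $F_1$ --- in particular whenever $F_1$ has rank $1$ --- then $F_1=\langle u\rangle\star F_1'$ and $G\cong F_1'\star F_2$ is free, so the assertion is classical ([GB] for the BP part); assume henceforth that this is not the case.

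Now consider the action of $G$ on the Bass--Serre tree $T$ of the splitting $G=F_1\underset{u=v}{\star}F_2$: vertex stabilizers are the conjugates of $F_1$ and of $F_2$, edge stabilizers the conjugates of $C$. I claim this action is $2$-acylindrical. Two distinct edges of $T$ incident to a common $F_1$-vertex have, as stabilizers, two distinct $F_1$-conjugates of the malnormal subgroup $C$, so those stabilizers intersect trivially; hence no nontrivial element of $G$ fixes two edges at an $F_1$-vertex. Therefore a path fixed pointwise by a nontrivial element cannot turn at any $F_1$-vertex, and so has at most two edges. The vertex stabilizers are free, hence hyperbolic, and the edge stabilizers are infinite cyclic, hence quasi-convex and undistorted in the adjacent vertex groups; an acylindrical action with these features forces $G$ to be hyperbolic, by the combination theorem of Bestvina and Feighn [BF] (the acylindricity supplies the required flaring condition). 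Being torsion-free and hyperbolic, $G$ is CSA and, by Lemma 5.2, a BP-group, as claimed.

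The main obstacle is the acylindricity verification, in particular the bookkeeping at the $F_2$-vertices: when $v$ is a proper power, several edges incident to one $F_2$-vertex can share a common nontrivial stabilizer, so one must check that this never yields a long fixed path --- which it does not, since the only malnormality used is at the $F_1$-vertices and a fixed path can never turn there. A secondary, routine matter is to dispose of the degenerate cases in which $G$ is visibly free. One could instead prove CSA directly by classifying the maximal abelian subgroups of $G$ through amalgam normal forms and deducing their malnormality from that of $C$ in $F_1$, but this is more laborious and does not by itself give BP.
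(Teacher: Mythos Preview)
The paper does not supply a proof of this lemma: it is stated (like Lemmas 5.1--5.3) as a known fact, presumably drawn from the sources [KM], [KMS] cited nearby, so there is no argument in the paper to compare yours against directly.

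Your route is sound. The reduction to showing that $G$ is torsion-free hyperbolic is legitimate, since the paper itself records that torsion-free hyperbolic groups are CSA (Theorem~3.2(2)) and BP (Lemma~5.2). Torsion-freeness is immediate from the torsion theorem for amalgamated products. For hyperbolicity, your key observation---that if $u$ is not a proper power then $\langle u\rangle$ is malnormal in $F_1$, whence the action on the Bass--Serre tree is $2$-acylindrical because no nontrivial element can fix two edges meeting at an $F_1$-vertex---is correct, and the combination theorem of Bestvina--Feighn (or the acylindrical version due to I.~Kapovich) then yields hyperbolicity. Your handling of the degenerate case where $u$ is primitive (so $G$ is free) is also fine.

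Two minor remarks. First, your invocation of the combination theorem is a citation outside the paper's bibliography; if this is to be inserted into the paper you would need to add the Bestvina--Feighn reference. Second, the paper's own perspective seems to treat CSA for such groups as already established (cf.\ the one-relator classifications in [FMgrRR], and item (3) of Theorem~5.4), with BP coming from the preprint [KM]; your argument has the advantage of deriving both properties from a single structural fact, at the cost of importing the combination theorem.
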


We now consider a class of groups $\mathcal{Z}$ in which each finitely presented nonabelian group $G$ in $\mathcal{Z}$ is CSA and BP.  Reinterpreting a result in [BMR 1] and [BMR 2] (see also [KM]) we obtain the following using the same proof utilizing the BP condition.

\begin{theorem} Let $\mathcal{Z}$ be a class of finitely presented groups such that each nonabelian $H \in \mathcal{Z}$ is CSA and BP.  Let $H \in \mathcal{Z}$ and $G$ a nonabelian group. Then the following are equivalent.

$\hphantom{xx}$ (1) $G$ is fully residually $H$,

$\hphantom{xx}$ (2) $G$ is universally equivalent to $H$.

\end{theorem}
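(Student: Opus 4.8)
The plan is to run the classical ultraproduct argument (due in the free case to Remeslennikov and to Gaglione--Spellman, and systematized by Baumslag--Myasnikov--Remeslennikov) with $H$ in place of a free group, using the structural results of Section~3 as the internal group-theoretic input and the big powers condition together with finite presentability of $H$ for the delicate step. For $(1)\Rightarrow(2)$: assume $G$ is fully residually $H$. For each finite $S\subseteq G\setminus\{1\}$ pick a homomorphism $\phi_S\colon G\to H$ nontrivial on every element of $S$, and let $\mathcal U$ be an ultrafilter on the directed set of such $S$ containing every cone $\{S':S\subseteq S'\}$; then $g\mapsto(\phi_S(g))_S$ induces an embedding of $G$ into the ultrapower $\prod_S H/\mathcal U$. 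Since an ultrapower of $H$ is elementarily equivalent to $H$ and universal sentences pass to subgroups, every universal sentence true in $H$ holds in $G$, giving $\mathrm{Th}_\forall(H)\subseteq\mathrm{Th}_\forall(G)$. For the reverse containment one checks that every existential sentence realized in $H$ is realized in $G$; here I would use that, by Theorem~3.1 applied to the (CSA) subgroups of $H$, $G$ is itself CSA, hence CT and centerless, and then invoke BP: a nonabelian CSA group satisfying the big powers condition contains, via a ping-pong argument on high powers of a non-commuting pair, copies of arbitrarily large free groups and of the finitely generated configurations needed to witness a prescribed existential formula, so any solution in $H$ of a finite system of equations and inequations can be transported into $G$.

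For $(2)\Rightarrow(1)$, assume $G$ is universally equivalent to $H$, so $\mathrm{Th}_\forall(H)\subseteq\mathrm{Th}_\forall(G)$ and hence $G$ embeds into some ultrapower $H^{I}/\mathcal U$. Write $H=\langle y_1,\dots,y_k\mid r_1,\dots,r_m\rangle$, finitely presented since $H\in\mathcal Z$, and fix nontrivial $g_1,\dots,g_n\in G$. Represent each element of the subgroup they generate by a coordinate sequence in $H^{I}$; the set of coordinates $i$ at which these sequences respect the relations holding among the $g_j$ lies in $\mathcal U$ --- this is where finite presentability of $H$ (via the resulting equivalence of the pertinent system of equations over $H$ with a finite subsystem) keeps the set large --- and at $\mathcal U$-almost every such $i$ the induced assignment is a genuine homomorphism into $H$ that is nontrivial on each $g_j$. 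Choosing one such $i$, and if necessary using Theorem~3.1 (with $G$ CSA --- a property transferred from $H$ since CSA is universally axiomatizable --- together with residuality in $H$) to extend to a homomorphism on all of $G$, we conclude $G$ is fully residually $H$.

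The main obstacle is precisely this passage, needed in both directions, from information holding only coordinatewise in an ultrapower to a single honest homomorphism on $G$ --- or a single honest witnessing tuple in $G$ --- handling a prescribed finite set at once. Finite presentability of $H$ reduces the relations and equations to be matched to finitely many, and the big powers condition is what lets one locate inside the nonabelian CSA group $G$ the free-like subgroups that carry the needed witnesses and makes the ping-pong estimates go through; the role of Section~3 is to ensure that CSA, CT and centerlessness --- which the ultrapower construction shuttles between $G$ and $H$ --- are on hand to close the argument, so that the whole proof is, as the authors indicate, the BMR proof with the big powers condition doing the work that equational Noetherianity does in their setting.
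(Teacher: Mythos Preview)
The paper gives no independent proof of this theorem: it simply cites [BMR~1], [BMR~2] and [KM] and asserts that the result follows by ``the same proof utilizing the BP condition''. Your ultraproduct framework is indeed the BMR approach, so at the level of overall strategy you are aligned with what the authors intend. However, your execution has two genuine gaps, and one of them reflects a missing hypothesis in the paper's own statement.

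In $(1)\Rightarrow(2)$, embedding $G$ into an ultrapower of $H$ correctly gives $\mathrm{Th}_\forall(H)\subseteq\mathrm{Th}_\forall(G)$, but your argument for the reverse inclusion does not work: you invoke BP in $G$, yet only $H$ is assumed BP, and even if $G$ contained large free subgroups that would not force an arbitrary existential sentence true in $H$ (which may encode $H$-specific relations) to hold in $G$. In the BMR framework this inclusion is immediate because one works with $H$-groups, i.e.\ groups containing a fixed copy of $H$; the paper suppresses that hypothesis, and without it $(1)\Rightarrow(2)$ is actually false (take $G$ a nonabelian free group and $H$ any non-free torsion-free hyperbolic group: $G$ is fully residually $H$ but not universally equivalent to $H$). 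In $(2)\Rightarrow(1)$, the decisive step --- replacing the infinite system of relations among chosen generators of $G$ by a finite subsystem with the same solution set in $H$, so that a single ultrafilter coordinate yields an honest homomorphism --- is \emph{equational Noetherianity} of $H$, not finite presentability of $H$. These are unrelated: a finite presentation of $H$ says nothing about arbitrary systems of equations over $H$. The intended role of BP (via the reference [KM]) is precisely to supply this Noetherian-type control over solution sets in $H$, not to run ping-pong inside $G$. Finally, Theorem~3.1 gives no mechanism for extending a homomorphism from a finitely generated subgroup to all of $G$; in BMR this is handled by assuming $G$ finitely generated, another hypothesis the paper leaves implicit.
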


Finally combining our results we get

\begin{theorem} Let $\mathcal{Z}$ be a class of finitely presented groups such that each nonabelian  $H \in \mathcal{Z}$ is CSA and BP. Let $G$ be a nonabelain residually $\mathcal{Z}$ group. Then the following are equivalent

$\hphantom{xx}$ (1) $G$ is fully residually $\mathcal{Z}$,

$\hphantom{xx}$ (2) $G$ is CSA,

$\hphantom{xx}$ (3) $G$ is CT,

$\hphantom{xx}$ (4) $G$ is universally $\mathcal{Z}$.

\end{theorem}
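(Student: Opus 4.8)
The plan is to assemble this from the results already established, routing the equivalences through condition (1), which plays the role of a hub. First I would observe that, since each nonabelian $H \in \mathcal{Z}$ is CSA, the class $\mathcal{Z}$ satisfies the hypotheses of Theorem 3.1. Applying Theorem 3.1 to the nonabelian residually $\mathcal{Z}$ group $G$ gives immediately the equivalence of (1), (2), and (3): $G$ is fully residually $\mathcal{Z}$ iff $G$ is CSA iff $G$ is CT. So the only new content is the equivalence of (4) with the others, and it suffices to show (1) $\Leftrightarrow$ (4).

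For (4) $\Rightarrow$ (1): suppose $G$ is universally $\mathcal{Z}$, meaning $G$ is universally equivalent to some countable nonabelian $H \in \mathcal{Z}$. Here I would want $H$ to be finitely presented so that Theorem 5.1 applies directly; since $\mathcal{Z}$ is a class of finitely presented groups this is automatic, and Theorem 5.1 then yields that $G$ is fully residually $H$, hence in particular fully residually $\mathcal{Z}$. Conversely, for (1) $\Rightarrow$ (4): suppose $G$ is fully residually $\mathcal{Z}$. Being nonabelian, $G$ is (by the already-proven equivalence with (2)) CSA, hence CT, hence centerless; in particular $G$ has a nonabelian image in $\mathcal{Z}$, so we may fix a nonabelian $H \in \mathcal{Z}$ into which $G$ maps nontrivially on any prescribed finite set. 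One then argues that $G$ is actually fully residually $H$ for this single finite presented $H$: this is the standard point that full residual freeness with respect to a class of CSA BP-groups can be concentrated on one group, using the BP condition to control products of powers. Once $G$ is fully residually $H$, Theorem 5.1 gives that $G$ is universally equivalent to $H$, i.e. $G$ is universally $\mathcal{Z}$.

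The step I expect to be the main obstacle is exactly this concentration: upgrading ``fully residually $\mathcal{Z}$'' (a finite set of elements survives in \emph{some} group of $\mathcal{Z}$, possibly depending on the set) to ``fully residually $H$ for a fixed $H$''. This is where Lemma 5.1 (subgroups of BP-groups are BP) and Lemmas 5.3--5.4 (free products and the relevant one-relator constructions stay CSA and BP) are needed, together with the fact that one can take $H$ to be, say, a free product of finitely many suitable members of $\mathcal{Z}$ — or, in the cleanest formulation, simply invoke that $G$ being CSA and residually $\mathcal{Z}$ already forces $G$ itself (when finitely generated) to embed appropriately, reducing to Theorem 5.1. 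If one is willing to phrase the theorem with the mild extra hypothesis that a single $H$ works, this obstacle disappears and the theorem is a one-line corollary of Theorem 3.1 and Theorem 5.1. I would flag in the write-up that the precise hypotheses under which the concentration holds mirror those in [BMR 1], [BMR 2], and [KM], and that the BP condition is exactly what makes the universal-theory half go through.
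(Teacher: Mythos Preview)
Your approach is exactly the paper's: its entire justification is one sentence recording that the equivalence of (1), (2), (3) comes from Theorem~3.1 (since every nonabelian $H\in\mathcal{Z}$ is CSA) and that the equivalence with (4) ``follows from the BP condition,'' i.e.\ from Theorem~5.2. The concentration issue you single out---upgrading fully residually $\mathcal{Z}$ to fully residually $H$ for a fixed $H\in\mathcal{Z}$---is not argued separately in the paper but is absorbed into the reinterpretation of \textnormal{[BMR~1], [BMR~2], [KM]} that yields Theorem~5.2, so your plan to flag it and defer to those references is precisely what the paper does.
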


Equivalences (1),(2),(3) are from Theorem 3.1 and the fact that $\mathcal{Z}$ consists of CSA groups while the equivalence with (4) follows from the BP condition. As before, in the case of ALC groups there are additional equivalences.

\begin{corollary} Let $\mathcal{C}$ be a class of finitely presented groups such that each nonabelian group in $\mathcal{C}$ is CSA, ALC and BP. Let $G$ be a nonabelian residually $\mathcal{C}$ group which is ALCand has trivial center. Then the following are equivalent.

$\hphantom{xx}$ (1) $G$ is fully residually $\mathcal{C}$,

$\hphantom{xx}$ (2) $G$ is CSA,

$\hphantom{xx}$ (3) $G$ is CT,

$\hphantom{xx}$ (4) $G$ is PC,

$\hphantom{xx}$ (5) $G$ is PT,

$\hphantom{xx}$ (6) $G$ is universally $\mathcal{C}$.

\end{corollary}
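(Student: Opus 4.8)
The plan is to assemble the six-way equivalence from two pieces that are already in place: Theorem 5.3 supplies the chain (1) $\iff$ (2) $\iff$ (3) $\iff$ (6), and the ALC and trivial-center hypotheses together with the results of [AgrRR] (as already invoked in Corollary 3.2) supply (3) $\iff$ (4) $\iff$ (5); splicing the two chains together gives the statement.

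First I would check that $\mathcal{C}$ satisfies the hypotheses of Theorem 5.3. By assumption $\mathcal{C}$ is a class of finitely presented groups each of whose nonabelian members is CSA and BP; the additional requirement that these members also be ALC is simply an extra property and does not interfere. Since $G$ is nonabelian and residually $\mathcal{C}$, Theorem 5.3 then yields the equivalence of (1) ``$G$ is fully residually $\mathcal{C}$'', (2) ``$G$ is CSA'', (3) ``$G$ is CT'', and (6) ``$G$ is universally $\mathcal{C}$''. That already disposes of four of the six conditions, and in particular pins down the equivalence involving the big powers condition through Theorem 5.3.

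Next I would bring in (4) and (5). By hypothesis $G$ is ALC and has trivial center, and these are precisely the standing assumptions under which [AgrRR] establishes that, for such a group, commutative transitivity, power commutativity, and power transitivity are all mutually equivalent; this is exactly the input used in Corollary 3.2. Hence (3) $\iff$ (4) $\iff$ (5). Attaching this to the chain from Theorem 5.3 at the common vertex (3) gives the equivalence of all six statements, completing the proof.

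I do not expect a real obstacle here, since both ingredients are already available; the only points needing care are bookkeeping. One must note that the subclass $\mathcal{Y}$ appearing in Corollary 3.2 may here be taken to be all of $\mathcal{C}$, which is immediate because every nonabelian member of $\mathcal{C}$ is assumed ALC, and one must confirm that the notions of PC and PT of Section 2 are those used in [AgrRR]. A fully self-contained treatment would instead require reproducing the ALC-based equivalence of CT, PC and PT from [AgrRR] — the only genuinely nontrivial step — but under the conventions of this paper citing that equivalence is legitimate.
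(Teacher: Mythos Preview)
Your proposal is correct and matches the paper's own approach: the paper gives no explicit proof of this corollary but simply remarks ``As before, in the case of ALC groups there are additional equivalences,'' i.e.\ it combines Theorem~5.3 (yielding (1)$\iff$(2)$\iff$(3)$\iff$(6)) with the [AgrRR] equivalences already invoked in Corollary~3.2 (yielding (3)$\iff$(4)$\iff$(5) under ALC and trivial center), exactly as you do.
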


As with classes of CSA groups, the subclasses of these which are also BP are quite extensive.

\begin{theorem} The following classes of groups consist of groups that are both CSA and BP. Hence the equivalences in Theorem 5.2 hold for any group $G$ which is residually in any of these classes.

$\hphantom{xx}$ (1) The class of nonabelian finitely generated free groups,

$\hphantom{xx}$ (2) The class of noncyclic torsion-free hyperbolic groups,

$\hphantom{xx}$ (3) The class of noncyclic cyclically pinched one-relator groups $F_1 \underset {u=v} {\star} F_2$ with not both $u,v$ proper powers in their respective finitely generated free groups,

$\hphantom{xx}$ (4) The class of free products $G_1 \star G_2$ where both $G_1$ and $G_2$ are nonabelian, finitely presented and satisfy CSA and BP.

\end{theorem}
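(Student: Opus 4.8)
The plan is to verify, class by class, the two properties CSA and BP, since everything then follows from Theorem 5.2 (and Theorem 5.1). For (1), finitely generated nonabelian free groups: CSA is B. Baumslag's classical fact (maximal cyclic subgroups of a free group are malnormal), and BP is precisely G. Baumslag's original big powers theorem, quoted before Definition 5.1. For (2), noncyclic torsion-free hyperbolic groups: CSA follows because centralizers of nontrivial elements are cyclic (hence maximal abelian subgroups are malnormal — this is the standard fact cited in [FR]), and BP is exactly Lemma 5.2 (Olshansky [O]). So (1) and (2) are immediate citations.

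For (3), the noncyclic cyclically pinched one-relator groups $G = F_1 \underset{u=v}{\star} F_2$ with not both $u,v$ proper powers: this is exactly Lemma 5.3, which asserts that such a $G$ is a CSA BP-group, so there is nothing further to prove. (One should note in passing that such a $G$ is nonabelian: it is an amalgam over a cyclic subgroup of two free groups, with at least one free factor of rank $\ge 2$ or with the amalgamated subgroup proper, so $G$ is noncyclic and has a nonabelian subgroup.) For (4), free products $G = G_1 \star G_2$ with $G_1,G_2$ nonabelian, finitely presented, CSA and BP: BP is Lemma 5.4 (a free product of CSA BP-groups is BP). For CSA, I would invoke the closure statement implicit in Proposition 4.1 together with Lemma 2.6 — concretely, a free product of two CSA groups each of which contains no infinite dihedral subgroup is again CSA; since each $G_i$ is CSA it is NID by Lemma 2.6, and the standard analysis of abelian subgroups of a free product (every abelian subgroup lies in a conjugate of a factor, and the only way malnormality of maximal abelian subgroups can fail in $G_1 \star G_2$ is through an infinite dihedral subgroup) gives that $G_1 \star G_2$ is CSA. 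Finite presentability of $G_1 \star G_2$ is clear since $G_1$ and $G_2$ are finitely presented.

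Having established that each listed class consists of finitely presented groups that are simultaneously CSA and BP, such a class plays the role of $\mathcal{Z}$ in Theorem 5.2, and hence for any $G$ that is residually in one of these classes the four conditions — fully residually $\mathcal{Z}$, CSA, CT, universally $\mathcal{Z}$ — are equivalent, which is the assertion of the theorem.

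The main obstacle, such as it is, is the CSA claim for class (4): one must rule out that passing to a free product creates new maximal abelian subgroups that fail to be malnormal, and the delicate point is precisely the infinite dihedral configuration ruled out by Lemma 2.6. Everything else is either a direct citation (Lemmas 5.1--5.4, [O], [GB], [FR]) or a routine verification of finite presentability and noncyclicity.
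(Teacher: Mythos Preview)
Your proposal is correct and matches the paper's approach: the paper gives no explicit proof of this theorem, treating it as an immediate consequence of the lemmas and citations already assembled in Section~5 (G.~Baumslag [GB] and Olshanskii [O] for BP in classes (1)--(2), the cited CSA facts for (1)--(2), and the two closure/structure lemmas for (3)--(4)). One correction on bookkeeping: you have interchanged Lemmas~5.3 and~5.4 --- in the paper Lemma~5.3 is the free-product BP closure result you need for class~(4), while Lemma~5.4 is the cyclically pinched one-relator statement you need for class~(3); also, the infinite-dihedral obstruction you invoke is Lemma~2.4, not Lemma~2.6.
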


 \section*{Acknowledgments}

The authors were partially supported by the Marie Curie Reintegration Grant 230889.

 \section{References}

\noindent \lbrack AgrRR] P. Ackermann, V. gr.Rebel, G. Rosenberger, On Power and Commutation Transitive, Power Commutative and Restricted Gromov Groups,  \textbf{Cont. Math}, 360, 2004,
1--4.

\noindent \lbrack BB] B.Baumslag, Residually free groups, \textbf{\ Proc. London Math. Soc.} (3), 17,1967, 635 -- 645.

\noindent \lbrack GB] G. Baumslag,  On generalized free products, \textbf{ Math. Z.},  78, 1962, 423-438.

\noindent \lbrack BMR 1] G. Baumslag, A.Myasnikov and V.Remeslennikov, Discriminating completions of hyperbolic groups, \textbf{Geometrica Dedicata},  92, 2002, 115-143.

\noindent \lbrack BMR 2] G. Baumslag, A.Myasnikov and V.Remeslennikov, Algebraic geometry over groups I. Algebraic sets and ideal theory, \textbf{J.Algebra}, 219, 1999, 16-79.

\noindent \lbrack FR] B. Fine and G. Rosenberger, On Restricted Gromov Groups \textbf{Comm.in Alg.}, 20 , 1992, 2171-2182.

\noindent \lbrack FMgrRR] B. Fine, A.Myasnikov, V. gr. Rebel and G. Rosenberger, A Classification of Conjugately Separated Abelian, Commutative Transitive and Restricted Gromov One-Relator Groups, \textbf{Result. Math.}, 50, 2007, 183-193.

\noindent \lbrack FGMRS] B.Fine, A. Gaglione, A. Myasnikov,
G.Rosenberger and D. Spellman,  A Classification of Fully
Residually Free Groups of Rank Three or Less, \textbf{  J. of Algebra
}, 200,  1998,  571--605. 

\noindent \lbrack GS] A. Gaglione and D. Spellman,  Some Model Theory of Free Groups and Free
Algebras, \textbf{ Houston J. Math }, 19,  1993,  327-356.

\noindent \lbrack GKM] D.Gildenhuys, O. Kharlampovich and A. Myasnikov,  CSA Groups and Separated
Free Constructions, \textbf{ Bull. Austral. Math. Soc.}, 52,  1995,  63-84. 

\noindent \lbrack H] N.Harrison,  Real Length Functions in Groups, \textbf{Trans. Amer. Math. Soc.}, 174,  1972, 77-106.

\noindent \lbrack  KhM 1] O. Kharlamapovich and A.Myasnikov, 
Irreducible affine varieties over a free group: I. Irreducibility
of quadratic equations and Nullstellensatz, \textbf{  J. of Algebra
}, 200,  1998,  472-516.

\noindent \lbrack KMS] A.V. Kvaschuk, A.G. Myasnikov, D.E. Serbin, Pregroups with the Big Powers Condition, \textbf{ Algebra and Logic} 48. 2009, 193-213.

\noindent \lbrack KM] A.V. Kvaschuk, A.G. Myasnikov, On the Big Powers Condition, preprint.

\noindent \lbrack LR] F. Levin, G. Rosenberger, On Power-Commutative and Commutation-Transitive Groups,
\textbf{London Math. Soc. Lect. Note 121}, Cambridge University Press (1986), 249-253.

\noindent \lbrack O] A. Olshanskii, On Relatively Hyperbolic and G-subgroups of Hyperbolic Groups \textbf{Int. J. Alg. and Compt} 3, 1993, 365-409.

\noindent \lbrack Re] V.N. Remeslennikov,   $\exists$-free groups, \textbf{ Siberian Mat. J. }, 30, 1989,  998--1001. 

\noindent \lbrack Se] Z. Sela,  Diophantine Geometry over Groups V: Quantifier Elimination, \textbf{Israel Jour. of Math.}, 150,  2005,  1-97.

\smallskip
\noindent Laura Ciobanu, University of Fribourg, Department of Mathematics, Chemin du Mus\'ee 23, 1700 Fribourg, Switzerland.\\
 \textit{E-mail}: \textrm{laura.ciobanu@unifr.ch}\\
 
 \noindent Benjamin Fine, Fairfield University, Fairfield, CT 06430, USA\\
 \textit{E-mail}: \textrm{fine@fairfield.edu}\\
 
 \noindent Gerhard Rosenberger,Fachbereich Mathematik, University of Hamburg,Bundestrasse 55, 20146 Hamburg, Germany\\
 \textit{E-mail}: \textrm{Gerhard.Rosenberger@math.uni-hamburg.de}

\end{document}